\newcommand{\comment}[1]{}
\newtheorem{theorem}{Theorem}
\newtheorem{proposition}[theorem]{Proposition}
\newtheorem{lemma}[theorem]{Lemma}
\newtheorem{corollary}[theorem]{Corollary}
\newtheorem*{proposition*}{Proposition}
\newtheorem*{theorem*}{Theorem}
\theoremstyle{definition}
\newtheorem*{definition*}{Definition}
\newtheorem*{question}{Question}
\newcommand\con{\qopname\relax o{conv}}
\newcommand{\NN}{\mathbb N}
\newcommand{\RR}{\mathbb R}
\newcommand\PP{\mathcal{P}}
\newcommand\Pn{\mathcal{P}_n}
\newcommand\Pm{\mathcal{P}_m}
\newcommand\spn{\mathop{\mathrm{span}}}
\newcounter{step}
\def\resstep{\setcounter{step}{0}}
\def\step#1{\stepcounter{step}\noindent\textbf{Step \thestep.}\hskip1ex}
\begin{document}
\title{Positive bases in spaces of polynomials}

%%% \date{}

\author[B. Farkas]{B\'alint Farkas}
\address[B\'alint Farkas]{Technische Universit\"{a}t Darmstadt
\newline Fachbereich Mathematik, AG4
\newline Schlo\ss{}gartenstra\ss{}e 7, D-64289, Darmstadt, Germany.}
\email{farkas@mathematik.tu-darmstadt.de}

\author[Sz. Gy. R\'ev\'esz]{Szil\'ard Gy. R\'ev\'esz}
\address[Szil\'ard Gy. R\'ev\'esz]{A. R\'enyi Institute of Mathematics
\newline Hungarian Academy of Sciences,
\newline Budapest, P.O.B. 127, 1364 Hungary.}
\email{revesz@renyi.hu}

\date{\today}

%\thanks{\textbf{2000 Mathematics Subject Classification:} Primary 41A17. Secondary
%30E10, 41A44.}

\subjclass[2000]{Primary 41A17. Secondary 30E10, 41A44.}

\keywords{Positive polynomials, positive bases, Bernstein-Lorentz representation, Bernstein-Lorentz degree}

%\thanks{\textbf{Key words and phrases:}
%Positive polynomials, positive bases, Banach lattices,
%Bernstein-Lorentz representation, Bernstein-Lorentz degree,
%Choquet-Kendall theorem.
%}

\let\oldfootnote\thefootnote
\def\thefootnote{}

\thanks{The 2nd named author was supported in part by the Hungarian
National Foundation for Scientific Research, Project \#s T-049301,
T-049693 and K-61908.}

\thanks{This work was accomplished during the 2nd author's stay in Paris
under his Marie Curie fellowship, contract \# MEIF-CT-2005-022927.}

%\address[Szil\'ard Gy. R\'ev\'esz]{Institut Henri Poincar\'e,
%\newline 11 rue Pierre et Marie Curie, 75005 Paris, France}
%\email{revesz\@ihp.jussieu.fr}

%\comment{
%\address[Ioannis A. Polyrakis]
%{National Technical University of Athens \\
%Faculty of Applied Sciences, Department of Mathematics, Zografou
%Campus 157 80, Athens, Greece.} \email{ypoly@math.ntua.gr} }

%%%%%%%%%%%%%%%%%%%%%%%%%%%%%%%%%%%%%%%%%%%%%%%%%%%%%%%%%%%%%%%%%%%%%%%%%%%%%%%%%%%%%%%%%%%%%
%%                                                                                         %%
%%     ABSTRACT                                                                            %%
%%                                                                                         %%
%%%%%%%%%%%%%%%%%%%%%%%%%%%%%%%%%%%%%%%%%%%%%%%%%%%%%%%%%%%%%%%%%%%%%%%%%%%%%%%%%%%%%%%%%%%%%

\begin{abstract}For a nonempty compact set $\Omega\subseteq \RR$ we
determine the maximal possible dimension of a subspace
$X\subseteq\Pm(\Omega)$ of polynomial functions over $\Omega$ with
degree at most $m$ which possesses a \emph{positive basis}. The
exact value of this maximum depends on topological features of
$\Omega$, and we will see that in many of the cases $m$ can be
achieved. Whereas only for low $m$ or finite sets $\Omega$ is it
possible that we have a subspace $X$ with positive basis and with
$\dim X=m+1$. Hence there is no $\Omega$ for which a positive basis
exists in $\Pm$ for all $m\in\NN$.
\end{abstract}

\maketitle

\section{Introduction}

Consider the interval $I=[-1,1]$ and the space $\Pn$ of algebraic
polynomials with degree at most $n\in\NN$ over $I$. In many
problems, e.g., in approximation theory, it is desirable to
represent a given polynomial $p\in \bigcup_n \Pn=:\PP$ in terms of
positive linear combinations of preliminary given, \emph{positive}
polynomials (positive means here and in the following that the
polynomial is pointwise nonnegative). For example consider the
Bernstein polynomials
\begin{equation*}
e_{nk}(x):=(1-x)^k(1+x)^{n-k}\quad\mbox{for $k=0,\dots,n$},
\end{equation*}
which are evidently positive. The following is classical.
\begin{proposition*} The system $E_n:=\{e_{n0},e_{n1},\dots,e_{nn}\}$ is a basis of $\Pn$.
\end{proposition*}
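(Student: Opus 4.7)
The plan is to prove this by a dimension count combined with linear independence. Since $\dim \Pn = n+1$ and the set $E_n$ has exactly $n+1$ elements, it suffices to verify that $e_{n0},e_{n1},\dots,e_{nn}$ are linearly independent in $\Pn$; spanning then follows for free.

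The key structural observation I would exploit is that $e_{nk}(x)=(1-x)^k(1+x)^{n-k}$ has a zero of order exactly $k$ at $x=1$ (and symmetrically of order $n-k$ at $x=-1$). Thus the orders of vanishing of the $e_{nk}$ at the point $x=1$ are the distinct integers $0,1,\dots,n$. This is exactly the situation one meets when proving that a triangular system is invertible, and it trivializes linear independence.

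Concretely, suppose $\sum_{k=0}^n c_k e_{nk}(x)\equiv 0$ on $I$. Evaluating at $x=1$ annihilates every term with $k\geq 1$ and leaves $c_0\cdot 2^n=0$, so $c_0=0$. Factor $(1-x)$ out of the remaining identity, divide by it (valid in $\Pn$), and again evaluate at $x=1$ to obtain $c_1\cdot 2^{n-1}=0$, whence $c_1=0$. An obvious induction on $k$ yields $c_k=0$ for $k=0,1,\dots,n$, establishing linear independence and hence the basis property.

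There is no genuine obstacle here; the whole content is the remark on distinct vanishing orders. An equivalent and perhaps more conceptual route is to introduce $u=1-x$, $v=1+x$ and observe that the substitution $p(x)\mapsto P(u,v)$, where $P$ is the unique homogenisation of $p$ to a form of degree $n$ in $u,v$, is a linear isomorphism from $\Pn$ onto the space of binary forms of degree $n$, in which $\{u^k v^{n-k}\}_{k=0}^{n}$ is manifestly a basis. Either argument gives the result in essentially one line once the right viewpoint is adopted.
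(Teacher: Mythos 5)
Your proof is correct, but it takes the complementary route to the paper's. Both arguments start from the dimension count $\#E_n=n+1=\dim\Pn$; the paper then verifies \emph{spanning} directly, by writing each monomial as
$x^k=\bigl(\tfrac12[(1+x)-(1-x)]\bigr)^k\bigl(\tfrac12[(1+x)+(1-x)]\bigr)^{n-k}$
and expanding, whereas you verify \emph{linear independence}, via the observation that $e_{nk}$ vanishes to order exactly $k$ at $x=1$, which makes the system triangular under repeated evaluation at $1$ and division by $(1-x)$. Your induction is sound (the division step is legitimate because a polynomial identity holding on $[-1,1]$ holds identically). What the paper's approach buys is an explicit representation of an arbitrary $p\in\Pn$ in the system $E_n$ with computable coefficients, which is the natural lead-in to the subsequent discussion of \emph{positive} representations and the Lorentz degree; your vanishing-order argument is shorter but purely existential. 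Note also that your closing remark about homogenising in $u=1-x$, $v=1+x$ is not really a second proof so much as the conceptual restatement of the paper's own computation: the displayed identity for $x^k$ is precisely that homogenisation written out.
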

\begin{proof} For the number of elements $\# E_n= n+1$ equals $\dim \Pn$, it suffices to show
that $E_n$ generates $\Pn$. For this purpose, we can represent the
monomials $x^k$ as
\begin{align*}
x^k& =\Bigl(\tfrac12 [(1+x)-(1-x)]\Bigr)^k \\& =\Bigl(\tfrac12
[(1+x)-(1-x)]\Bigr)^k\Bigl(\tfrac12 [(1+x)+(1-x)]\Bigr)^{n-k},
\end{align*}
which, when expanded, yields a desired representation in terms of
$e_{nk}$. From this the assertion follows.
\end{proof}
Now back to the original question.  Does every positive element
$p\in\PP$ have a positive representation
\begin{align}\label{eq:posrep}
p(x)&=\sum_{k=0}^N a_k (1-x)^k(1+x)^{N-k} = \sum_{k=0}^N a_k
e_{Nk}(x) \quad \mbox{with $a_0,a_1,\dots,a_N\geq 0$}
\end{align}
for some $N\in\NN$? The affirmative answer is due to Bernstein
\cite{bernstein:1952}, see also \cite[vol. II p. 83, Aufgabe
49]{polya/szego:1925}.

\begin{theorem*}[Bernstein]\label{th:Bernstein} Every polynomial
positive on $[-1,1]$ has a positive representation
\eqref{eq:posrep}.
\end{theorem*}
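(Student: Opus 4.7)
The plan is to pass to the interval $[0,1]$ via the substitution $t=(1-x)/2$. Since $(1-x)^k(1+x)^{N-k} = 2^N t^k(1-t)^{N-k}$, setting $q(t):=p(1-2t)$ reduces the theorem to the statement that every polynomial $q$ strictly positive on $[0,1]$ admits, for some $N$, an expansion
\[
q(t) = \sum_{K=0}^N c_K \binom{N}{K} t^K(1-t)^{N-K}, \qquad c_K\ge 0.
\]
For each $N\ge \deg q$ the coefficients $c_K$ are uniquely determined (by the Proposition above, applied on $[0,1]$); the real task is to show that for $N$ large they are all nonnegative.

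Let $q$ have degree $n$ and fix its Bernstein expansion at level $n$, namely $q(t)=\sum_{k=0}^n b_k \binom{n}{k} t^k(1-t)^{n-k}$. For $N\ge n$, multiply by the trivial identity $1=(t+(1-t))^{N-n}$, expand by the binomial theorem, and collect the coefficient of $t^K(1-t)^{N-K}$. The Vandermonde convolution $\sum_k \binom{n}{k}\binom{N-n}{K-k}=\binom{N}{K}$ then shows
\[
c_K^{(N)} = \sum_{k=0}^n b_k\, w_{k,K}^{(N)}, \qquad w_{k,K}^{(N)} := \frac{\binom{n}{k}\binom{N-n}{K-k}}{\binom{N}{K}},
\]
where the weights $w_{k,K}^{(N)}$ are nonnegative and sum to $1$ over $k$ (so each $c_K^{(N)}$ is a convex combination of the $b_k$).

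The technical heart of the argument is the uniform asymptotic
\[
w_{k,K}^{(N)} = \binom{n}{k}\Bigl(\tfrac{K}{N}\Bigr)^{k}\Bigl(1-\tfrac{K}{N}\Bigr)^{n-k} + O(1/N) \qquad (N\to\infty),
\]
with $n$ fixed and the error uniform in $k\in\{0,\dots,n\}$ and $K\in\{0,\dots,N\}$. This is a direct estimate, obtained by expanding the falling factorials in numerator and denominator of $w_{k,K}^{(N)}$ and comparing. Substituting into the previous display yields
\[
c_K^{(N)} = q(K/N) + O(1/N),
\]
again uniformly in $K$.

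Finally, by continuity and strict positivity of $q$ on the compact interval $[0,1]$ there is $\delta>0$ with $q\ge \delta$. Choosing $N$ so large that the $O(1/N)$ correction is below $\delta/2$ forces $c_K^{(N)}\ge \delta/2>0$ for every $K$, and undoing the change of variable gives the required positive representation of $p$, with coefficients $a_K = 2^{-N}\binom{N}{K} c_K^{(N)} \ge 0$. The principal obstacle is the uniform hypergeometric-to-binomial asymptotic stated above; once this elementary but slightly delicate estimate is in hand, the rest is bookkeeping and a compactness argument on $[0,1]$.
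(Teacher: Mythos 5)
Your proposal is correct. Note first that the paper does not prove this theorem at all --- it is quoted as a classical result with references to Bernstein and to P\'olya--Szeg\H{o} --- so there is no in-paper argument to compare against; what you have written is essentially the classical P\'olya degree-elevation proof that those references contain. The chain of steps is sound: the substitution $t=(1-x)/2$ correctly converts $e_{Nk}$ into $2^N t^k(1-t)^{N-k}$; multiplying the degree-$n$ Bernstein expansion by $(t+(1-t))^{N-n}$ and applying Vandermonde does express $c_K^{(N)}$ as a convex combination of the $b_k$ with the hypergeometric weights you state; and the uniform estimate $w_{k,K}^{(N)}=\binom{n}{k}(K/N)^k(1-K/N)^{n-k}+O(1/N)$ does follow by the elementary comparison of falling factorials you indicate (each of the $n$ factors in numerator and denominator differs from $K$, $N-K$, or $N$ by at most $n$, and the denominator is bounded below by $(N-n)^n$, so the error is $O(N^{n-1})/N^n$ uniformly in $K$), whence $c_K^{(N)}=q(K/N)+O(1/N)$ and the conclusion. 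One caveat worth recording: your argument genuinely requires \emph{strict} positivity of $p$ on the closed interval, whereas the paper's stated convention is that ``positive'' means pointwise nonnegative; under that literal reading the theorem is false (e.g.\ $p(x)=x^2$ has no such representation, since evaluating at $x=0$ forces all $a_k=0$). Your reading is the correct one --- the classical theorem concerns polynomials strictly positive on $[-1,1]$ (or positive on $(-1,1)$, after extracting the factors $(1\mp x)^j$ at the endpoints) --- so the restriction you impose is a feature, not a gap.
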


However, the precise truth is that the degree $N$ of the
representation \eqref{eq:posrep} is in general not equal to the
ordinary algebraic degree $n=\deg p$ of $p$. The minimal such $N$ is
called the \emph{Lorentz-degree} $L(p)$ of $p$.  For a positive
polynomial $p$ Bernstein's theorem is thus equivalent to
$L(p)<\infty$. An example for the use of positive representations in
approximation theory is the improvement of Bernstein, Schur and
Markov type inequalities for polynomials, which can be deduced by
replacing the ordinary degree by the Lorentz degree; for the details
see \cite{revesz:2006} and the references therein.

Among the various known estimations of the Lorentz degree, the
first, and probably the simplest, is the following.
\begin{theorem*}[Lorentz \cite{lorentz:1963}; see also \cite{scheick:1972}]\label{th:lorentz}
Assume that the positive polynomial $p$ does not vanish in the unit
disk. Then $L(p)=\deg p$.
\end{theorem*}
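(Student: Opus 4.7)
The plan is to reduce the statement to the classical observation that a real polynomial with all zeros in the open left half-plane has positive coefficients. The catalyst is the M\"obius involution $y = (1-x)/(1+x)$, which turns the Bernstein basis into the monomial basis.

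Write $n = \deg p$ and set
$$
P(y) := 2^{-n}(1+y)^n\, p\!\left(\tfrac{1-y}{1+y}\right).
$$
Because $p>0$ on $[-1,1]$, we have $p(\pm 1) > 0$, so $\deg P = n$ and $P(0) > 0$. Using the identities $1-x = 2y/(1+y)$ and $1+x = 2/(1+y)$ one verifies the key correspondence
$$
p(x) = \sum_{k=0}^{n} a_k (1-x)^k(1+x)^{n-k} \iff P(y) = \sum_{k=0}^{n} a_k y^k.
$$
Consequently the inequality $L(p) \le n$ is equivalent to $P$ having non-negative coefficients; since $L(p) \ge \deg p$ is trivial, only the upper bound requires work.

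Next I would locate the zeros of $P$. The map $\varphi(x) = (1-x)/(1+x)$ sends the closed unit disk bijectively onto the closed right half-plane, with the unit circle going to the imaginary axis. Hence the hypothesis that $p$ does not vanish for $|x|\le 1$ forces every zero of $P$ into the open left half-plane $\{\Re y < 0\}$. Factoring $P$ over $\RR$,
$$
P(y) = c \prod_{j} (y + \alpha_j) \prod_{\ell} \bigl(y^2 - 2\,\Re(\beta_\ell)\, y + |\beta_\ell|^2\bigr),
$$
where $c = p(-1)/2^n > 0$, $\alpha_j > 0$, and $\Re(\beta_\ell) < 0$. Each linear and each quadratic factor has strictly positive coefficients, and a product of polynomials with positive coefficients again has positive coefficients; therefore the coefficients of $P$ are positive and the proof is complete.

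The only genuinely non-routine ingredient is spotting the substitution in the first step. Once the Bernstein expansion is translated into a statement about monomial coefficients of $P$, the hypothesis on the zeros of $p$ feeds directly into the half-plane dichotomy for $P$, and the classical observation that a real polynomial with zeros in the open left half-plane has positive coefficients finishes the argument essentially by inspection.
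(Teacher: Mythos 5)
The paper does not actually prove this theorem---it is quoted from Lorentz and Scheick as a known result---so there is no internal proof to compare against; your argument is the classical one and it is correct in substance. The conjugation of the Bernstein basis into the monomial basis via $y=(1-x)/(1+x)$, the identification of $L(p)\le n$ with nonnegativity of the coefficients of $P$ (together with the trivial bound $L(p)\ge\deg p$), and the left half-plane factorization are all sound. Two boundary points deserve care. First, $\varphi$ maps the closed unit disk \emph{minus} $\{-1\}$ onto the closed right half-plane, since $-1\mapsto\infty$; consequently, if $p(-1)=0$ then $\deg P<n$. This is harmless (the missing top coefficients of $P$ are simply $0$ in the Bernstein expansion at level $n$), but your assertions that $\deg P=n$ and $P(0)>0$ implicitly read ``does not vanish in the unit disk'' as the \emph{closed} disk. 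Second, under the open-disk reading of the hypothesis, $p$ may have zeros on $|x|=1$; these land on the imaginary axis, so some quadratic factors become $y^2+|\beta_\ell|^2$ and a zero at $x=\pm1$ degenerates a linear factor. You should then replace ``strictly positive coefficients'' by ``nonnegative coefficients'' throughout, which is all that is needed: a product of real polynomials with nonnegative coefficients again has nonnegative coefficients, and the leading coefficient of $P$ remains positive (factor out of $p$ the power of $1+x$ vanishing at $-1$ and evaluate the cofactor there). With that adjustment the argument covers either reading of the hypothesis.
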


On the other hand, it is well-known that for $p\in\Pn$ positive the
Lorentz degree can be arbitrarily large. In other words, $\spn
\{e_{N0},e_{N1},\dots, e_{NN}\}$ does not contain $\Pn$ for any
$N\in \NN$. Thus in particular $E_N$ is not a positive basis for
$\PP_N$. A \emph{positive} basis in $\Pn$ is by definition  a basis
in $\Pn$ such that each positive element $p\in \Pn$ is a positive
linear combination of the basis elements. Positive bases have been
extensively studied by Polyrakis  in \cite{polyrakis:1987} and
\cite{polyrakis:1996} (see also \cite{polyrakis:1994,
polyrakis:1999} and \cite{polyrakis:2003}).

Little surprising is the following result of Polyrakis (see
Proposition 3.2, Corollary 3.3 and Example 4.2 of
\cite{polyrakis:1996}).
\begin{proposition}[Polyrakis] The space $\Pn$  does not have a positive basis at all.
\end{proposition}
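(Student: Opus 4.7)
The plan is to contrast the extreme ray structure that a positive basis would impose on the positive cone
\[
K_n := \{p\in\Pn : p\ge 0 \text{ on } I\}
\]
with the one read off directly from polynomial factorization. A positive basis $\{v_0,\ldots,v_n\}$ consists of elements $v_i\in K_n$ with the property that every $p\in K_n$ is a nonnegative combination of the $v_i$; combining the two containments forces
\[
K_n \;=\; \Bigl\{\textstyle\sum_{i=0}^n c_i v_i : c_i \ge 0\Bigr\},
\]
a simplicial cone with at most $n+1$ extreme rays. I will derive a contradiction by producing an infinite family of pairwise non-proportional extreme rays of $K_n$ whenever $n\ge 2$.

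To construct such a family I would induct on $n$. For each $a\in(-1,1)$, set
\[
p_a(x) := (x-a)^2 \, r(x),
\]
where $r$ is a fixed extreme element of $K_{n-2}$: take $r\equiv 1$ for $n=2$, $r=1-x$ for $n=3$, and use $r$ supplied by the inductive hypothesis for larger $n$. Plainly $p_a\in K_n$, and for $a\neq a'$ the polynomials $p_a, p_{a'}$ have distinct zero sets in $(-1,1)$, hence are non-proportional.

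The decisive step is verifying that each $p_a$ is an extreme ray of $K_n$. Suppose $p_a = q_1 + q_2$ with $q_j\in K_n$. Since $a\in(-1,1)$ is interior and $q_j(a)=0$ with $q_j\ge 0$ on $I$, the point $a$ is a double zero of $q_j$, so that $q_j(x) = (x-a)^2 s_j(x)$ with $s_j\in\PP_{n-2}$; dividing by the strictly positive factor $(x-a)^2$ off $a$ and appealing to continuity shows $s_j\in K_{n-2}$. The identity $s_1+s_2 = r$ together with extremality of $r$ forces $s_j = t_j r$ with $t_j\ge 0$, and thus $q_j = t_j p_a$. Since this produces uncountably many pairwise non-proportional extreme rays in $K_n$, the asserted contradiction with the bound $n+1$ follows.

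The step I expect to require the most care is the opening assertion that the basis elements $v_i$ of a positive basis necessarily lie in $K_n$, which is what makes the cone they generate equal to (and not merely a superset of) $K_n$. This is built into the standard definition of positive basis in the ordered-vector-space framework of Polyrakis but would warrant an explicit remark in a self-contained write-up. Beyond that the argument is purely geometric, resting on the observation that the continuous one-parameter family $\{p_a\}_{a\in(-1,1)}$ cannot be accommodated among the finitely many extreme rays of a simplicial cone.
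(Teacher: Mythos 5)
Your argument is sound, and it takes a genuinely different route from the paper, which in fact offers no proof of this proposition: it is quoted from Polyrakis, and the mechanism behind it (reproduced as Theorem \ref{th:generalprop} and then exploited throughout Sections \ref{sec:interval}--\ref{sec:genrealline}) is the node criterion --- a positive basis $b_1,\dots,b_n$ forces distinct nodes $t_1,\dots,t_n$ at which each $b_i$ with $i\ne k$ vanishes to strictly higher order than $b_k$, whence $\max_k\deg b_k\ge 2n-3$, which for $n=m+1$ exceeds $m$ as soon as $m\ge 2$. Your route is instead convex-geometric: a positive basis makes the cone $K_n$ of elements of $\Pn$ nonnegative on $[-1,1]$ simplicial, hence with at most $n+1$ extreme rays, while the family $p_a=(x-a)^2r$ exhibits a continuum of extreme rays. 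Your version is elementary and self-contained; the node machinery is what the paper needs anyway to reach the sharp bound $\dim X\le\bigl[\frac{m+3}{2}\bigr]$, of which this proposition is a weak corollary. Four points deserve attention. (i) The statement as printed carries no restriction on $n$, but it is false for $n=0,1$ (the Theorem of Section \ref{sec:interval} exhibits positive bases of $\PP_0$ and $\PP_1$ of full dimension), so your standing hypothesis $n\ge 2$ is not optional and should be stated. (ii) The positivity of the basis vectors, which you rightly flag, is not a formality: the Lagrange basis of $\PP_2$ at the points $-1,0,1$ is a basis for which every positive quadratic is a positive combination, yet two of its members change sign on $[-1,1]$; so the argument genuinely requires the full Polyrakis definition (basis vectors positive, so that $K_n$ \emph{equals} their conic hull), which is the one used in Theorem \ref{th:generalprop}. (iii) The claim that $p_a$ and $p_{a'}$ have \emph{distinct zero sets} can fail when both $a$ and $a'$ are roots of $r$; either restrict $a$ to the cofinite set $(-1,1)\setminus r^{-1}(0)$ or compare the multiplicities of the root at $a$. (iv) The induction needs an extreme element of $K_{n-2}$ to exist at every level; your explicit choices cover $n-2\in\{0,1\}$ and the constructed $p_a$ cover the rest, though one could simply invoke Minkowski's theorem for the closed pointed cone $K_{n-2}$.
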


Actually this negative result is the starting point of this paper.
We study the following problem. For  $X\subseteq \Pm$  a subspace
with positive basis, we are interested in how large the dimension
$\dim X$ can be. As observed above the maximal possible dimension
must be less than or equal to $m$, the exact answer will be given in
Section \ref{sec:interval}.

We also consider the more general case when the polynomials are
considered over a given nonempty compact subset
$\Omega\subseteq\RR$. For the determination of maximal dimension of
a subspace of $\Pm$ with positive basis we have to take into account
some topological-geometrical features of the set $\Omega$.  This
will be carried out in Section \ref{sec:genrealline}.

We begin with some preparation.
\subsection{Subspaces of $C(\Omega)$ with positive
bases}\label{subs:intr} We denote by $\Omega$ a nonempty compact set
in $\RR$, and by $C(\Omega)$ the space of real valued, continuous
functions defined on $\Omega$. We consider $C(\Omega)$ (partially)
ordered by the pointwise ordering. The notation $\Pn(\Omega)$ stands
for the space polynomials (polynomial functions) $p$ with $\deg
p\leq n$ restricted to the set $\Omega$. The ordering of the
polynomials is the one inherited from $C(\Omega)$.  As $\Omega$ is
fixed once for all, we will simple use the notation $\Pn$ instead of
$\Pn(\Omega)$, as well $\PP$ for $\bigcup_{n\in\NN} \Pn(\Omega)$.

Let $\{b_j\}$ be a (finite or infinite) sequence of $C(\Omega)$. If
$t$ is a point of $\Omega$ and there exists $k\in\mathbb{N}$ such
that $b_k(t)\neq 0$ and $b_j(t)=0$ for each $j\neq k$, then we call
that $t$ an $k$-\emph{node} (or simply a \emph{node}) of the
sequence $\{b_j\}$. If for each $k$ there exists an $k$-node $t_k$
of $\{b_j\}$, we shall say that $\{b_j\}$ is a \emph{sequence} of
$C(\Omega)$ \emph{with nodes} and also that $\{t_j\}$ is a
\emph{sequence of nodes} of $\{b_j\}$.

The following results form the background for our paper  (see
Theorem 2.1 and Propositions 2.2 and 2.4 of \cite{polyrakis:1996} or
Theorem 7 of \cite{polyrakis:2003}) and helps to find node systems.
\begin{theorem}\label{th:generalprop}
Let $E$ be a subspace of $C(\Omega)$ and $\{b_r\}$ be a sequence of
$E$ consisting of positive functions.
\begin{enumerate}[i)]
\item
If $\{b_r\}$ is a positive basis of $E$, then  for each $k$ there
exists a sequence $\{\omega_{m\nu}\}$ of $\Omega$
 such that for each $\nu\in\mathbb{N}$ we have
\begin{equation*}
0\le\sum_{i=1,i\neq k}^\nu\frac{b_i(\omega_{k\nu})}{
b_k(\omega_{k\nu})}<\frac{1}{\nu}.
\end{equation*}
Therefore
$\lim_{\nu\to\infty}\frac{b_i(\omega_{k\nu})}{b_k(\omega_{k\nu})}=0$
for each $i\neq k$.
\item
If $E$ is  an \emph{$n$-dimensional} subspace of $C(\Omega)$ and the
sequence $\{b_r\}$ consists of $n$ vectors $b_1, b_2, ..., b_n $,
then the converse of $(i)$ is also true. That is, if for each $1\le
k\le n$ there exists a sequence $\{\omega_{k\nu}\}$ of $\Omega$
 satisfying
\begin{equation*}
\lim_{\nu\to\infty}\frac{b_i(\omega_{k\nu})}{b_k(\omega_{k\nu})}=0\;\hbox{
for each $i\neq k$},
\end{equation*}
then $\{b_1,...,b_n\}$ is a positive basis of $E$.
\end{enumerate}
\end{theorem}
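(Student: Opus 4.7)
The plan rests on one clean equivalence: because $\{b_r\}$ is a basis, every element of $E$ has a unique expansion in it, so the positive-basis hypothesis is equivalent to asserting that $h=\sum_r c_r b_r\in E$ is pointwise nonnegative on $\Omega$ \emph{if and only if} every $c_r\ge 0$. My idea is to exploit this in both directions.

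For \textbf{(i)} I would argue by contradiction. Fix $k$ and $\nu\ge 1$, and suppose no $\omega\in\Omega$ meets the asserted inequality. Then for every $\omega$ with $b_k(\omega)>0$ we have $\nu\sum_{i\neq k,\,i\le\nu}b_i(\omega)\ge b_k(\omega)$, while the same inequality is trivial at points where $b_k(\omega)=0$ since the $b_i$ are nonnegative. Consequently the auxiliary function
\begin{equation*}
h_{k,\nu}\;:=\;\nu\sum_{i=1,\,i\neq k}^{\nu} b_i \;-\; b_k
\end{equation*}
is pointwise $\ge 0$ on $\Omega$. Its unique expansion in the basis $\{b_r\}$ carries coefficient $-1$ on $b_k$, which contradicts the equivalence above. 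Hence $\omega_{k\nu}$ exists for every $\nu$, and since each summand $b_i(\omega_{k\nu})/b_k(\omega_{k\nu})$ is nonnegative while the full sum is bounded by $1/\nu$, letting $\nu\to\infty$ yields the stated limits at once.

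For \textbf{(ii)} suppose the sequences $\{\omega_{k\nu}\}$ behave as hypothesised. Given $f=\sum_{i=1}^n c_i b_i\in E$ with $f\ge 0$ on $\Omega$, I would simply evaluate at $\omega_{k\nu}$, divide by $b_k(\omega_{k\nu})$ (necessarily nonzero by the hypothesis) and let $\nu\to\infty$:
\begin{equation*}
0\;\le\;\frac{f(\omega_{k\nu})}{b_k(\omega_{k\nu})}\;=\;c_k+\sum_{i\neq k} c_i\,\frac{b_i(\omega_{k\nu})}{b_k(\omega_{k\nu})}\;\longrightarrow\; c_k,
\end{equation*}
so $c_k\ge 0$. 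Applied instead to the identity $0=\sum c_i b_i$, the very same calculation forces $c_k=0$, giving linear independence of $\{b_1,\dots,b_n\}$; since $\dim E=n$ these vectors then form a basis, which by the previous step is positive.

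The only spot requiring invention rather than routine work is the choice of the auxiliary $h_{k,\nu}$ in (i): one must arrange the scaling so that the obligatory negative coefficient is forced onto $b_k$ (and not onto one of the $b_i$), which is exactly what the factor $\nu$ in front of the $\sum_{i\neq k}^\nu b_i$ accomplishes. Everything after that is a limit argument driven by the finite-dimensional linearity.
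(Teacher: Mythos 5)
Your proposal is correct: the auxiliary function $h_{k,\nu}=\nu\sum_{i\le\nu,\,i\neq k}b_i-b_k$ does exactly the right job in (i), and the evaluation-and-limit argument in (ii) (applied once to a nonnegative $f$ and once to the zero function to get linear independence) is sound. Note that the paper does not prove this theorem at all --- it quotes it from Polyrakis (Theorem 2.1 and Propositions 2.2, 2.4 of the 1996 paper) --- and your argument is essentially the standard one found there, up to the harmless rescaling of the auxiliary function by $\nu$.
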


Let $a=\min\Omega$, $b=\max\Omega$. Unless $\Omega$ is finite, which
leads to an equivalent question of considering the space $\RR^K$,
(where $K:=\# \Omega$ the number of elements in $\Omega$), we have
$\dot{\Omega} \ne \emptyset$ (where $\dot{H}$ stands for the set of
limit points of the set $H$). Unless otherwise stated, this will be
assumed  from now on. For the finite case of $\RR^K$, see for
example \cite{polyrakis:1994} (see also Proposition
\ref{p:sidecases}). Suppose that $X$ is a subspace of $\Pm$ with
positive basis $\{b_1,b_2,...,b_n\}$. For each $k=1,2,...,n$ let
$\omega_{k\nu}\longrightarrow t_k$ be a sequence of $\Omega$ with
the properties formulated above in Theorem \ref{th:generalprop},
that is
\begin{equation}\label{eq:e3}
\lim_{\nu\to\infty}\frac{b_i(\omega_{k\nu})}{b_k(\omega_{k\nu})}=0\;\hbox{
for each $i\neq k$}.
\end{equation}
Note that \eqref{eq:e3} entails $t_k\neq t_i$ for each $i\neq k$.
Then we can write with some $u_i(t)\in\Pm$, not vanishing at any of
the points $t_j$, that
\begin{equation*}
b_i(t)= \prod_{k=1}^n (t-t_k)^{r_{ik}}u_{i}(t),
\end{equation*}
where $r_{ik}$ is the order of $t_k$ as root of the polynomial
$b_i$. (If $t_k$ is not a root of $b_i$, then $r_{ik}=0$.) Moreover,
again by \eqref{eq:e3} we obtain that $b_i$ vanishes in a strictly
greater order at $t_k$ than $b_k$ for $i\neq k$. That is we have
$r_{ik}>r_{kk}$ (the latter may be $0$).

Using the above facts, we are going to estimate the degrees of $b_k$
from \emph{below} in terms of $n$, which, conversely, will give us
an \emph{upper} estimate of $n$ in terms of the degrees of $b_k$.
Thus we can estimate the dimension of a subspace $X\subset
\mathcal{P}_m$ with positive basis.

\section{Positive bases in subspaces of polynomials on intervals}\label{sec:interval}

Assume now that $\Omega=[a,b]$ and that $X\subseteq \mathcal{P}_m$
is subspace with positive basis $b_1,b_2,\dots,b_m$. By the
consequence of Polyrakis's result we have $t_1,t_2,\dots, t_n\in
[a,b]$ such that
\begin{equation}\label{eq:factor}
b_i(t)= \prod_{k=1}^n (t-t_k)^{r_{ik}}u_{i}(t),
\end{equation}
where $r_{ik}$ are \emph{even} natural numbers whenever the
corresponding $t_k$ satisfies $a<t_k<b$, because otherwise $b_k$
would change its sign at $t_k$.

As explained at the end of Section \ref{subs:intr}, we want to
estimate the degrees of $b_k$ from below. Let us do it for the sake
of clarity in a detailed manner. We therefore distinguish three
cases depending on whether $\{t_1,t_2,\dots,t_n\}\cap \{a,b\}$ has
$0,1$ or $2$ elements.

\vskip1ex\noindent Consider first the case $a=t_1<t_2<\cdots<t_n=b$.
Then for $k=1,n$ we obtain the following from \eqref{eq:factor}
\begin{align*}
\deg b_1&= \deg u_1+\sum_{i=1}^n r_{1i}\geq
0+r_{1n}+2\sum_{i=2}^{n-1}r'_{1i} +r_{nn}\geq 1+2(n-2)+0=2n-3,\\
\deg b_n&= \deg u_n+\sum_{i=1}^n r_{ni}\geq
0+r_{11}+2\sum_{i=2}^{n-1}r'_{ni} +r_{n1}\geq 0+2(n-2)+1=2n-3.
\end{align*}
For $1<k<n$ we have
\begin{equation*}
\deg b_k= \deg u_k+\sum_{i=1}^n r_{ki}\geq0+ r_{k1}+ 2\sum_{i=2\atop
i\neq k}^{n-1}r'_{ki} +r_{kk}+r_{kn}\geq 1+2(n-3)+0+1=2n-4.
\end{equation*}
Where in all of these estimates, we used that $r_{kk}\geq 0$,
$r_{ki}>0$ for $k\neq i$ and $r_{ki}=2r'_{ki}>0$ if $1<k<n$, which
again follow from the considerations at the end of the Section
\ref{subs:intr}

Similar arguments can be repeat in the case, when no $t_l$ coincides
with $a$ or $b$, so all the exponents $r_{ki}$ with $k\neq i$ must
be even and positive. Thus we can conclude from \eqref{eq:factor}
\begin{equation*}
\deg b_k= \deg u_k+\sum_{i=1}^nr_{ki}\geq 0+2\sum_{i=1\atop i\neq
k}^nr'_{ki}+r_{kk}\geq 2(n-1)+0=2(n-2).
\end{equation*}

Finally, a very same argumentation can be executed also for the
cases when among the endpoints $\{a,b\}\cap \{t_1,t_2,\dots ,t_n\}$
has exactly $1$ element; in that case we find again
$\max_{k=1,2,\dots,n} d_k \geq 2n-2$.

Therefore we conclude the inequality
\begin{equation}\label{eq:5}
\max_{k=1,\dots,n} \deg b_k\geq 2n-3
\end{equation}
for all possible cases with respect to $\# \{a,b\}\cap
\{t_1,t_2,\dots t_n\}$ being $0$, $1$ or $2$. As $b_i\in\Pm$, the
lower bound \eqref{eq:5} for the degree has to match with the upper
bound $m$ of the degree. Therefore the maximal possible value for
$n$ is
\begin{equation*}
n=\left[\frac{m+3}{2}\right].
\end{equation*}
An easy construction shows this to be sharp. Let $n=[\frac{m+3}{2}]$
and take $n$ different points  $a=t_1<t_2<\dots <t_n=b$ of
$\Omega=[a,b]$ and define
\begin{align}\label{eq:biforint}
\nonumber b_1(t)&=(t_n-t)\prod_{i=2}^{n-1}(t-t_i)^2,&& \qquad (t_1=a)\\
b_k(t)&=(t-t_1)(t_n-t)\prod_{i=2 \atop i\neq k}^{n-1}(t-t_i)^2,&&
\qquad (k=2,\dots,n-1)\\
\nonumber b_n(t)&=(t-t_1)\prod_{i=2}^{n-1}(t-t_i)^2,&& \qquad(t_n=b)
\end{align}
and take $X$ to be the subspace of $\Pm$ generated by the $b_k$'s.
Then $\{b_1,b_2,\dots,b_n\}$ is a positive basis of $X$, because if
we suppose that $p=\sum_{i=1}^n\lambda_ib_i$ we have that $p(t_i)
=\lambda_ib_i(t_i)$, therefore $p$ is positive if and only if
$\lambda_i\geq 0$ for each $i$. So we have the following
\begin{theorem} The maximal possible dimension of a subspace $X$ of $\Pm$ with a
positive basis is $n=[\frac{m+3}{2}]$. If $X\subseteq \Pm$ is such
subspace with positive basis $\{b_1,b_2,...,b_n\}$, then there exist
a node system  $t_1,t_2,...,t_n\in [a,b]$ with $t_i\neq t_j$ for
each $i\neq j$ such that the basis elements are either of the form
\eqref{eq:biforint} with $a=t_1<\dots<t_n=b$, or only in case of
even $m$, is one of the following forms: either with
$a=t_1<\dots<t_n\leq b$
\begin{equation*}%\label{eq:anotb}
b_1(t)=\prod_{k=2}^{n}(t-t_k)^2, \quad\textrm{and}\quad
b_i(t)=(t-t_1)\prod_{k=2 \atop k\neq i}^{n}(t-t_k)^2, \quad
(i=2,\dots,n);
\end{equation*}
or with $a\leq t_1<\dots<t_n=b$
\begin{equation*}%\label{eq:bnota}
b_i(t)=(t_n-t)\prod_{k=1 \atop k\neq i}^{n-1}(t-t_k)^2, \quad (1\leq
i \leq n-1) \quad \textrm{and} \quad
b_n(t)=\prod_{k=1}^{n-1}(t-t_k)^2;
\end{equation*}
or with $a\leq t_1<\dots<t_n\leq b$
\begin{equation*}%\label{eq:notanotb}
b_i(t)=\prod_{k=1 \atop k\neq i}^{n}(t-t_k)^2, \quad (1\leq i \leq
n).
\end{equation*}

\end{theorem}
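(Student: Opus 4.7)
The plan is to start from where the excerpt leaves off: the bound $n\le\left[\tfrac{m+3}{2}\right]$ and the two-endpoint construction \eqref{eq:biforint} witnessing its sharpness are already at hand, so what remains is to establish that at the extremal dimension any positive basis takes one of the four listed forms, and to identify which parities of $m$ accommodate each form.

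I would reanalyse the three cases $\#(\{a,b\}\cap\{t_1,\dots,t_n\})\in\{0,1,2\}$, now under the assumption $n=\left[\tfrac{m+3}{2}\right]$, which collapses the chain of inequalities leading to \eqref{eq:5} into a chain of equalities. The individual bounds there came from four elementary constraints: $\deg u_k\ge 0$, $r_{kk}\ge 0$, $r_{ki}\ge 1$ at an endpoint node with $i\ne k$, and $r_{ki}=2r'_{ki}\ge 2$ at an interior node with $i\ne k$. Collapsing each to equality forces $u_k$ to be a nonzero constant (of the sign making $b_k\ge 0$ on $\Omega$), $r_{kk}=0$, and every other $r_{ki}$ to take its minimum admissible value. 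Up to positive scaling this pins each $b_k$ down as the explicit $(t-t_j)$-product appearing in the theorem.

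A parity bookkeeping then matches each case to its parity of $m$: in the two-endpoint case the maximum of $\deg b_k$ over $k$ is $2n-3$, which accommodates both parities via $n=\left[\tfrac{m+3}{2}\right]$ and yields \eqref{eq:biforint}; in the one-endpoint case the maximum of $\deg b_k$ is $2n-2$, forcing $m$ even with $n=\tfrac{m+2}{2}$, and yielding the first or second alternative shape according to whether the coincident endpoint is $a$ or $b$; in the zero-endpoint case every $\deg b_k$ equals $2n-2$, again forcing $m$ even and yielding the third alternative shape.

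Finally each of the three new families must be shown to span an $n$-dimensional $X\subseteq\Pm$ for which it is itself a positive basis, by the same argument the excerpt used for \eqref{eq:biforint}: at the node $t_k$ only $b_k$ is nonzero, so any $p=\sum_{i=1}^n\lambda_ib_i$ satisfies $p(t_k)=\lambda_kb_k(t_k)$, whence $p\ge 0$ on $\Omega$ forces $\lambda_k\ge 0$ for every $k$ and the same evaluation also yields linear independence. I expect the chief technical nuisance to be the equality-forcing step for interior indices $1<k<n$ in the two-endpoint, odd-$m$ case: the lower bound $\deg b_k\ge 2n-4$ falls one unit below $m=2n-3$, so confining $b_k$ to the exact form \eqref{eq:biforint} rather than permitting an additional positive factor of degree one will demand a separate argument.
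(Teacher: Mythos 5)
Your strategy coincides with the paper's: the case analysis on $\#\bigl(\{a,b\}\cap\{t_1,\dots,t_n\}\bigr)$ yielding \eqref{eq:5} and hence $n\le\bigl[\frac{m+3}{2}\bigr]$, the construction \eqref{eq:biforint} for sharpness, and the verification via evaluation at the nodes (equivalently, Theorem \ref{th:generalprop}(ii)) are exactly what the paper does. For the classification, the paper offers no explicit argument beyond the degree estimates, and your plan of collapsing the chain of inequalities to equalities is the only reading of its intent; your parity bookkeeping in the one- and zero-endpoint cases is correct, since there $\max_k\deg b_k\ge 2n-2$ forces $m=2n-2$ even and every inequality becomes an equality, which does pin down each $b_k$ as the stated product.

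However, the ``technical nuisance'' you flag at the end is a genuine gap, and no separate argument can close it, because the step is false as stated. In the two-endpoint case with $m$ odd, $m=2n-3$, an interior element $b_k$ ($1<k<n$) is constrained only by $2n-4\le\deg b_k\le 2n-3$, and the spare unit of degree can be spent in several inequivalent ways. Concretely, for $m=3$, $n=3$, $a=t_1<t_2<t_3=b$, keep $b_1=(t_3-t)(t-t_2)^2$ and $b_3=(t-t_1)(t-t_2)^2$ from \eqref{eq:biforint} but replace $b_2=(t-t_1)(t_3-t)$ by $b_2'=(t-t_1)^2(t_3-t)$, or by $(t-t_1)(t_3-t)u(t)$ with $u$ linear and strictly positive on $[a,b]$. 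All three polynomials are nonnegative on $[a,b]$, have degree at most $3$, and the evaluation matrix at $(t_1,t_2,t_3)$ is diagonal with nonzero entries, so by Theorem \ref{th:generalprop}(ii) the system is a positive basis of its three-dimensional span; one checks $b_2'\notin\mathrm{span}\{b_1,b_2,b_3\}$, so this is a genuinely different extremal subspace whose (essentially unique) positive basis is not of any of the four listed forms. Thus your proposal, like the paper, establishes the first sentence of the theorem (the value of the maximal dimension) and the even-$m$ classification, but the exact-form claim for interior basis elements when $m$ is odd cannot be derived by equality-forcing, nor by any other means.
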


%%%%%%%%%%%%%%%%%%%%%%%%%%%%%%%%%%%%%%%%%%%%%%%%%%%%%%%%%%%%%%%%%%%%%%%%%%%%%%%%%%%%%%%%%%%%%
%%                                                                                         %%
%%                                                                                         %%
%%                                                                                         %%
%%%%%%%%%%%%%%%%%%%%%%%%%%%%%%%%%%%%%%%%%%%%%%%%%%%%%%%%%%%%%%%%%%%%%%%%%%%%%%%%%%%%%%%%%%%%%

\section{Positive basis of polynomials on compact sets of $\RR$}\label{sec:genrealline}

In this section we consider $\Omega\subset\RR$ compact sets. Our aim
is to determine the minimum of possible maximal degrees of systems
of basis functions $b_j$ ($j=1,\dots,n$) of some subspace
$X\subset\Pm$ on $\Omega$. The answer will be more delicate than in
the case of an interval.

We will always write $a:=\min \Omega$ and $b:=\max \Omega$, i.e.,
$\con \Omega =[a,b]$.  We also need to introduce  some auxiliary
terminology.

\begin{definition*}
Let $t=(t_j)_{j=1}^n$, ordered naturally as $a\leq
t_1<t_2<\dots,t_n\leq b$, be a node system for the basis functions
$b_j$, ($j=1,\dots,n$) of the subspace $X$ in $\Pm$. For technical
reasons we take now $t_0:=-\infty$ and $t_{n+1}:=+\infty$, and we
define the \emph{type} of the node sequence $t$ by a sequence
$\omega:=(\omega_j)_{j=0}^n$ of $0$'s and $1$'s, of length $n+1$, so
that $\omega_j=0$ if $\Omega\cap (t_{j},t_{j+1})= \emptyset$, and
$\omega_j=1$ if $\Omega\cap (t_{j},t_{j+1})\ne \emptyset$.
Therefore, there is $x_j\in\Omega\cap (t_{j},t_{j+1})$ if and only
if $\omega_j=1$. The notation $x(t)$ or $x_j$ will always refer to
such a sequence from now on.
\end{definition*}

As in \eqref{eq:factor} for a given system of nodes $t$ we can write
\begin{equation*}%\label{eq:bj}
b_j(x)=\prod_{i=1}^n (x-t_i)^{r_{ij}} u_j(x) \qquad (j=1,\dots,n),
\end{equation*}
with $r_{ij}\geq 1$ for all $j\ne i$ and $u_j$ having no zeroes at
points of $t$.

Our task is to determine, in terms of geometrical-topological
features of the set $\Omega$, the minimal $m$ so that all $b_j$ are
contained in $\Pm$. This means that we consider all possible
positive basis $b_1,b_2,\dots,b_n$  and  minimize the occurring
maximal degree of basis elements. Obviously, it is equivalent to
considering all basis $b_1,b_2,\dots,b_n$ and then all nodes $t$ for
them, or conversely, all node systems $t$ and then all basis systems
$b_1,b_2,\dots,b_n$ having these as nodes. Therefore, our goal is
the determination of
\begin{align}\label{eq:dnomegadef}
d_n(\Omega)&:=\min \Bigl\{\max_{j=1,\dots,n} \deg b_j ~~:~b_1,\dots b_n ~~ \textrm{is a positive basis}\Bigr\}\notag \\
&=\min \Bigl\{\max_{j=1,\dots,n} \deg b_j ~~:~(b_j)_{j=1}^n ~~\textrm{is a p.b. with nodes}~~t\in\Omega^n\Bigr\}\\
&=\min_{t\in\Omega^n} \min \Bigl\{\max_{j=1,\dots,n} \deg b_j
~~:~~(b_j)_{j=1}^n ~~\textrm{is a p.b. with nodes}~~t\Bigr\}\notag
\end{align}
Before computing $d_n(\Omega)$, we consider the closely related,
auxiliary question of finding
\begin{equation}\label{eq:bdegree}
\min \Bigl\{\deg b  ~:~ 0\leq b|_{\Omega},~ b(x)=\prod_{i=1}^n
(x-t_i)^{r_{i}} u(x), ~~ r_i\geq 1 ~(i=1,\dots,n)\Bigr\}
\end{equation}
for a fix system of nodes $t\subset\Omega$. Simplifying this we
consider a modified problem, with a particular subset
$\Omega':=t\cup x(t)$ replacing $\Omega$ in the nonnegativity
condition:
\begin{equation}\label{eq:bdegreeprime}
\min \Bigl\{\deg b ~:~ 0\leq b|_{\Omega'},~ b(x)=\prod_{i=1}^n
(x-t_i)^{r_{i}} u(x), ~~ r_i\geq 1
~(i=1,\dots,n)\Bigr\};%
\end{equation}
Or just as a function of $\omega$:
\begin{align} \label{eq:tauomega}
\tau(\omega):=\min\Bigl\{\deg p :& ~p\not\equiv 0,~~p(t_i)=0
~(i=1,\dots n),\notag \\ &  ~ p(x_j)\geq 0\mbox{ for all $j$ with
$\omega_j=1$} \Bigr\}.
\end{align}
Note that according to the requirement $p(t_i)=0$, we always must
have $\tau(\omega)\geq n$. Although in \eqref{eq:bdegreeprime} the
quantity seems to depend on particular values of $t_i$ and $x_j$, in
the reworded formulation \eqref{eq:tauomega} we have already
retreated in notation to the mere mentioning of $\omega$. That the
quantities in \eqref{eq:bdegreeprime} and \eqref{eq:tauomega} are
equal is an easy argument: here we spare the reader the details, as
this will also follow from our considerations below, see Corollary
\ref{c:omegaandprime} . First we determine the value of
$\tau(\omega)$ for a given $\omega=\omega(t)$. We need the
following.
\begin{definition*}A \emph{block} of zeros or ones in $\omega$ is a
maximal family of neighboring zeros respectively ones.  A zero block
is \emph{inner}, if in $\omega$ there are ones before and after it.
For any finite sequence $\omega\in\{0,1\}^{n+1}$ for some $n\in \NN$
set
\begin{align*}
N(\omega)&:=\#\{j~:~\omega_j=1\}, \notag \\
K(\omega)&:=\#~~ \textrm{inner blocks of 0 digits of odd
length}, \\
\nu(\omega)&:= \prod_{i=0}^n (1-\omega_i) =\begin{cases} 1
&\textrm{if}~~ \forall ~~ \textrm{digits of} ~~\omega~~\textrm{are}
~~0
\\ 0 &\textrm{if}~~ \exists ~~ \textrm{ digit of}
~~\omega~~\textrm{which is} ~~1.
\end{cases} \notag
\end{align*}
\end{definition*}
So, e.g., if $n=10$ and
$\omega=(1,0,0,1,0,1,1,0,0,0,1)\in\{0,1\}^{11}$, then $N(\omega)=5$
and $K(\omega)=2$, and for
$\omega=(0,1,0,0,1,0,1,1,0,0,0,1)\in\{0,1\}^{12}$ the same values of
$N$ and $K$ occur, too. For both cases $\nu(\omega)=0$. Note that in
view of the condition that $K$ is the number of \emph{inner} zero
blocks of odd length, we must have $N(\omega)>K(\omega)$, unless
both numbers are zero.

\begin{lemma}\label{l:bdeg} We have
$\tau(\omega)=n-1+N(\omega)-K(\omega)+\nu(\omega)$. Moreover, there
is always an extremal polynomial of minimal degree $\deg
p=\tau(\omega)$, so that $p$ is the product of linear factors
vanishing at the nodes $t_i$ ($i=1,\dots,n$) only.
\end{lemma}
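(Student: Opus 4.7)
The plan is to reduce the problem to a sign-pattern question via the factorization $p(x)=\prod_{i=1}^n(x-t_i)\cdot \tilde p(x)$, which is legitimate because $p(t_i)=0$ forces each linear factor to divide $p$; then $\deg p=n+\deg\tilde p$. Since $x_j\in(t_j,t_{j+1})$, the sign of $\prod_{i=1}^n(x_j-t_i)$ equals $\eta_j:=(-1)^{n-j}$, so the constraint $p(x_j)\ge 0$ for $\omega_j=1$ becomes the prescription $\eta_j\tilde p(x_j)\ge 0$, imposed at the $N=N(\omega)$ points $x_{j_1},\dots,x_{j_N}$ corresponding to the positions of $1$ in $\omega$.

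Next I would invoke the classical fact that the minimum degree of a nonzero polynomial $\tilde p$ with prescribed sign conditions $\eta_{j_k}\tilde p(x_{j_k})\ge 0$ at $N$ ordered points equals the number $c$ of strict sign changes of the sequence $(\eta_{j_k})_{k=1}^N$. The upper bound $c$ is achieved by placing one simple real root in each open interval where the prescribed sign flips and rescaling by a global $\pm 1$. The lower bound is the main technical point: each sign change forces a real zero of $\tilde p$ in a closed interval, and when two adjacent sign changes share an endpoint $x_{j_{k+1}}$ (so that $\eta_{j_k}\ne\eta_{j_{k+1}}\ne\eta_{j_{k+2}}$ but $\eta_{j_k}=\eta_{j_{k+2}}$) a \emph{simple} zero there would flip the sign only once, contradicting the requirements at both outer endpoints; hence the shared zero must have even multiplicity, still contributing a full unit per sign change to $\deg\tilde p$.

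Now $\eta_{j_k}\ne\eta_{j_{k+1}}$ precisely when $j_{k+1}-j_k$ is odd, i.e., when the inner zero block between positions $j_k$ and $j_{k+1}$ has even length (including the degenerate length $0$ of two adjacent $1$'s). Of the $N-1$ gaps between consecutive $1$'s in $\omega$, exactly $K=K(\omega)$ have odd block length and so contribute no sign change, whence $c=N-1-K$ whenever $\nu(\omega)=0$ (i.e., $N\ge 1$); this gives $\tau(\omega)=n+N-1-K$. In the complementary case $\nu(\omega)=1$ the sign conditions are vacuous, $\tilde p\equiv 1$ suffices, and $\tau(\omega)=n$. Both expressions combine into $\tau(\omega)=n-1+N-K+\nu$.

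The ``moreover'' assertion is realised by the explicit form $p=\epsilon\prod_{i=1}^n(x-t_i)^{r_i}$: start from $r_i=1$ for every $i$ and, for each of the $N-1-K$ consecutive $1$-pairs $(j_k,j_{k+1})$ with $j_{k+1}-j_k$ odd, promote one exponent $r_i$ with $j_k<i\le j_{k+1}$ to $2$. The total degree becomes $n+N-1-K=\tau(\omega)$, all zeros lie at nodes, and choosing $\epsilon\in\{\pm 1\}$ so that $p$ is positive on the first relevant interval $(t_{j_1},t_{j_1+1})$ then propagates the correct sign to every $x_{j_k}$ by the parity design of the exponents.
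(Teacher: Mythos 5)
Your proof is correct, but it follows a genuinely different route from the paper's. The paper proves the formula by induction on $n$: it settles $\omega=\mathbf 1$ by a direct zero-counting argument, and then reduces a general $\omega$ step by step (stripping a leading or trailing zero by dividing out $(x-t_1)$, collapsing a pair of adjacent zeros by dividing out $(x-t_j)(x-t_{j+1})$, and finally handling isolated interior zeros by relaxing to the all-ones case and repairing the construction with factors $\tfrac{x-t_j}{x-t_{j+1}}$). You instead factor out $\prod_{i=1}^n(x-t_i)$ once and for all, observe that the sign of this product at $x_j$ is $(-1)^{n-j}$, and thereby convert the whole problem into the classical question of the minimal degree of a nonzero polynomial with prescribed signs at $N$ ordered points, which equals the number of strict sign changes $c$ of the prescription. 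Your bookkeeping that a gap between consecutive ones produces a sign change exactly when the intervening zero block has even length (including length zero), so that $c=N-1-K$ when $N\geq 1$, is correct, as is the degenerate case $\nu(\omega)=1$, and your explicit construction with exponents $r_i\in\{1,2\}$ realizes the minimum by node-supported linear factors exactly as the lemma asserts. What your approach buys is a conceptual explanation of the quantity $N-K-1$ as a sign-change count and the avoidance of the paper's multi-case induction; what the paper's approach buys is self-containedness, since it never has to appeal to an external fact.

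The one place you should tighten is the lower bound in the ``classical fact.'' As written, your argument that a zero shared by two adjacent sign-change intervals ``must have even multiplicity'' implicitly assumes $\tilde p$ has no other zeros between the outer constraint points; if it does, there is no contradiction, and the accounting of ``one unit of degree per sign change'' needs to be organized differently. The clean fix is an induction on the number of sign changes: at the first sign change the two constraint values satisfy $\tilde p(x_{j_k})\tilde p(x_{j_{k+1}})\leq 0$, hence $\tilde p$ has a zero $\xi$ in the closed interval $[x_{j_k},x_{j_{k+1}}]$; dividing by $(x-\xi)$ preserves (up to a harmless global sign at the points to the left of $\xi$, of which one may keep none) the remaining sign constraints at the points $x_{j_{k+1}},\dots$, and lowers both the degree and the sign-change count by one. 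With that repair your argument is complete.
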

\begin{proof}
\resstep{} \step{}  First of all, consider the case of $n=1$, when
there is only one node $t_1$, where $p$ must vanish. That entails
that for any polynomial $p$ satisfying the conditions of
\eqref{eq:tauomega}, $\deg p\geq 1$ and $(x-t_1)|p(x)$, in the sense
of polynomial division in $\RR[x]$. There is the case
$\omega=(0,0)$, when $p(x)=x-t_1$ suffices; and it also suffices for
$\omega=(0,1)$, too, while for $\omega=(1,0)$ the opposite, i.e.
$-(x-t_1)$ will suffice. Finally, no linear function, vanishing at
$t_1$, can remain positive on both sides of $t_1$, so $\tau(\omega)$
must be at least 2 if $\omega=(1,1)$: and indeed, $(x-t_1)^2$ is
appropriate. In all these cases $K(\omega)=0$, and it is easy to
check that the formula, stated by the lemma, holds true for $n=1$:
also, the given minimal degree polynomials are products of zero
factors vanishing at the only node $t_1$.

\step{} Let now $n=2$. Now $\tau(\omega)\geq 2$ and
$p_0(x):=(x-t_1)(x-t_2)|p(x)$. So we have $\tau(\omega)=2$ for
$\omega=(0,0,0)$, $\omega=(1,0,1)$, $\omega=(1,0,0)$ and
$\omega=(0,0,1)$, as $p_0$ itself shows, and also for
$\omega=(0,1,0)$, as shown by $-p_0$. On the other hand, with
exactly two neighboring $1$'s in the sequence $\omega$, $c\cdot p_0$
itself will not suffice, as it necessarily changes sign at $t_1$ and
at $t_2$, and only there. So at two intermediate points $x_j$ around
any of these nodes the sign of $c\cdot p_0$ is always the opposite.
On the other hand, $(x-t_1)(x-t_2)^2$ works for $\omega=(0,1,1)$,
and symmetrically $(x-t_1)^2(t_2-x)$ works for $\omega=(1,1,0)$, so
for these two sequences $\tau(\omega)=3$. At last, consider
$\omega=(1,1,1)$. We claim that $\tau(\omega)>3$, (and whence
$\tau(\omega)=4$, as shown by $(x-t_1)^2(x-t_2)^2$). But it is easy
to see that $q(x)=c(x-\xi)(x-t_1)(x-t_2)$ cannot work for any choice
of $\xi$ and $c\ne 0$, since $c(x-\xi)$ can not have alternating
signs at the points $x_0<x_1<x_2$. It follows $\tau((1,1,1))=4$.
Again, it is easy to check that the asserted formula holds for
$n=2$, and of course the above given extremal polynomials are just
products of factors of the form $(x-t_1)$ and $(x-t_2)$.

\step{} The very last case is easily generalized to the cases when,
for $n\in\NN$ arbitrary, our sequence of $\omega$ is the identically
$1$ sequence, $\omega={\mathbf 1}=(1,1,\dots,1)\in\{0,1\}^{n+1}$. We
assert that then $\tau({\mathbf 1})=2n$. The cases $n=1,2$ having
already been obtained, our proof is by induction. Let $n>2$ and
assume the assertion for all $n'<n$.

Let $p(x)$ be a polynomial in the definition \eqref{eq:tauomega} of
$\tau$. We distinguish two cases: first assume that there exists
some $x_j$ with $p(x_j)=0$. Then with any neighboring node -- say,
if $j\ne 0$, we can take $t_j$, otherwise $t_1$ for $x_0$ -- we can
consider the new polynomial
$\widetilde{p}(x):=p(x)/((x-t_j)(x-x_j))$, or
$\overline{p}(x):=p(x)/((x-t_j)(x-x_j))$ if $j=0$. Since these cases
are similar, we describe the case of $\widetilde{p}$ only. This
polynomial vanishes for all $t_i$ unless $i=j$, and is still
nonnegative at all $x_i$'s, save $i=j$, because all the other
$x_i$'s are outside the interval $(x_j,t_j)$, and thus the
denominator of $\widetilde{p}(x)$ remains positive there.  At $t_j$
and $x_j$ we neither know, nor we are interested any more in the
sign of $\widetilde{p}(x)$. But the above means that
$\widetilde{p}(x)$ is a proper polynomial for
$\widetilde{t}=(t_1,\dots,t_{j-1},t_{j+1},\dots,t_n)$ and
intermediate points $\widetilde{x}:=x(\widetilde{t})=
(x_0,x_1,\dots,x_{j-1},x_{j+1},\dots,)$ of length one less, and with
$\omega={\mathbf 1}\in\{0,1\}^n$ now. So the induction hypothesis
applies with $n'=n-1$, and $\deg \widetilde{p}\geq 2n'$, hence $\deg
p\geq 2n$.

Consider now the second case, when for all $i=0,\dots,n$, we have
$p(x_i)>0$. Then in the open interval $(x_i,x_{i+1})$ $p$ must have
at least two zeroes, (when zeroes are counted with multiplicity),
since it has at least one (at $t_i$), and only one simple zero does
not allow for the strictly positive values at both ends of the
interval. But that means that altogether $p$ has at least $2n$
zeroes, proving again that $\deg p\geq 2n$, as claimed.

On the other hand, $\prod_i(x-t_i)^2$ is obviously a proper
polynomial in the definition of $\tau(\omega)$ for all $\omega$,
(furthermore, it is the product of zero factors $(x-t_i)$, too)
hence $\tau({\mathbf 1})\leq 2n$, and thus $\tau({\mathbf 1})=2n$.

\step{} The above argument is also a model for our next
considerations: we argue by induction, reducing the number of digits
(the length) of $\omega$, whenever possible. Let $n\geq 3$, and
assume that $\omega$ contains some zero digit(s) (as $\omega=\mathbf
1$ has already been settled). Our reduction steps will concentrate
on the zero digit(s).

First, consider the case when $\omega$ has some zero at one of its
ends, say at the start of it. Then for $p$ any proper polynomial in
\eqref{eq:tauomega}, the polynomial $\widetilde{p}(x):=p(x)/(x-t_1)$
is a proper polynomial for the new system of nodes
$\widetilde{t}:=t\setminus \{t_1\}$ arising by removal of $t_1$ but
keeping all the $x_j$'s with $\omega_j=1$ (and thus replacing
$\omega$ by $\widetilde{\omega}$, the sequence with the first $0$
digit of $\omega$ removed). Note that the length decreased by one,
$N$ and $K$ remained unchanged, and the degree of $p$ was decreased
by $1$. Furthermore, we have removed only a zero from $\omega$,
hence $\nu(\omega)=1$ if and only if $\nu(\widetilde{\omega})=1$. So
the induction hypothesis  with $\widetilde{n}=n-1$ and with the
given system $\widetilde{t}$, $x(\widetilde{t})$ and
$\widetilde{\omega}$ yields the assertion. Remark that with a
minimal degree polynomial $\widetilde{p}$ for $\widetilde{t}$ and
$\widetilde{\omega}$ is a product of zero factors $(x-t_i)$, thus
the corresponding $p(x):=(x-t_1)\widetilde{p}(x)$ is also of the
this type, as asserted.

\step{} Second, consider the case when there are neighboring zeroes
anywhere in the sequence $\omega$. Let us pick up the first such
pair, say $\omega_j=0=\omega_{j+1}$ with $j$ minimal having this
property. Then define $\widehat{\omega}$ by deleting  these two
zeroes from the sequence $\omega$. The characteristics $\nu$, $N$
and $K$ do not change, while the length decreases by 2. Let $p$ be a
polynomial satisfying the conditions in \eqref{eq:tauomega}, and put
now $\widehat{p}(x):=p(x)/((x-t_j)(x-t_{j+1}))$. Let now
$\widehat{t}=(t_1,\dots,t_{j-1},t_{j+2}, \dots,t_n)$ (of length
$n-2$). Then $\widehat{p}$ is appropriate for $\widehat{t},
\widehat{\omega}, \widehat{x}$ in the sense of \eqref{eq:tauomega}
if and only if $p$ is appropriate for $t,\omega, x$ in the same
sense. Conversely, starting out from a polynomial $\widehat{p}$,
proper for $\widehat{t}$ etc., we find that
$p(x):=\widehat{p}(x)\cdot(x-t_j)(x-t_{j+1})$ will satisfy all
conditions for $t$ etc. This shows that
$\tau(\widehat{\omega})=\tau(\omega)-2$, as the minimal degree
polynomials correspond. That the minimal degree polynomials are the
product of zeros factors $(x-t_i)$ is again apparent.

\step{} Third, it remains to consider the case when there are no end
zeroes and neither there are double zeroes in $\omega$. That is, we
then must have a sequence of 1's with a few, say $k$, isolated
interior zeroes inside; let the set of indices with $\omega_j=0$ be
denoted by $z$. Then removing $\{t_j~:~j\in z\}$ from $t$, there
remains a set of $n-k$ nodes $\overline{t}:=\{t_i~:~1\leq i \leq n,
~\omega_i=1\}$, strictly interlacing with the set $x(t)$. Clearly,
$\tau(\omega)\geq \overline{\tau}(\omega)$ with
\begin{equation*}%\label{eq:taubar}
\overline{\tau}(\omega):=\min\{\deg\overline{p}~:~
\overline{p}\not\equiv 0,~\overline{p}(x_j)\geq 0 ~(\omega_j=1),
p(t_i)=0 ~(i\notin z)\},
\end{equation*}
since in $\overline{\tau}(\omega)$ we have just relaxed a few
conditions in the definition \eqref{eq:tauomega}. Now we have a new
system of $\overline{t}, \overline{x}, \overline{\omega}={\mathbf
1}\in\{0,1\}^{n-k+1}$, for which a minimal degree polynomial is
known, according to the above settled case of $\omega=\mathbf 1$. We
thus have $\overline{\tau}(\omega)=\tau(\overline{\omega})=2(n-k)$,
hence in particular $\tau(\omega)\geq 2(n-k)$, and an extremal
polynomial for $\overline{\tau}(\omega)$ is
\begin{equation*}
\overline{p}(x):=\prod_{i\in\overline{t}} (x-t_i)^2.
\end{equation*}
However, if $\omega_j=0$, then $j\in z$ and, on the other hand,
$j+1\notin z$. Since $\omega_j=0$, in the interval $(t_j,t_{j+1})$
there is no condition on the sign of $p$ in \eqref{eq:tauomega}, and
outside of $[t_j,t_{j+1}]$ the function $(x-t_j)/(x-t_{j+1})$ is
strictly positive. Therefore, together with $\overline{p}$, also the
polynomial
\begin{equation*}
p(x):={\overline{p}(x)}{\prod_{\omega_j=0}\frac{x-t_j}{x-t_{j+1}}}=\prod_{\omega_j=0}
(x-t_j)(x-t_{j+1}) \prod_{\omega_{i-1}\cdot\omega_i=1}(x-t_i)^2
\end{equation*}
satisfies all sign conditions of \eqref{eq:tauomega}; observe, that
$p(t_i)=0$ now for all $i=1,\dots,n$. It shows that
$\tau(\omega)\leq 2(n-k)$, hence $\tau(\omega)=2(n-k)$.

It remains to observe that also in this case the asserted formula
holds true, since now $\nu(\omega)=0$, $N(\omega)=n-k+1$ and
$K(\omega)=k$; moreover, $p$ is again a product of zero factors with
the nodes as roots.
\end{proof}

\begin{corollary}\label{c:omegaandprime} Let $\Omega\subset \RR$ be
arbitrary, $t_1,t_2,\dots, t_n\in \Omega$ a node sequence, and
$\omega$ be the type of $t=(t_1,t_2,\dots,t_n)$ with respect to
$\Omega$. Then the minimal degree of polynomials, as defined in
\eqref{eq:bdegree} and \eqref{eq:bdegreeprime}, are both equal to
$\tau(\omega)$ in \eqref{eq:tauomega}.
\end{corollary}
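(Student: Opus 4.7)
The plan is to prove the corollary by establishing a chain of inequalities. Denote by $d_\Omega$, $d_{\Omega'}$, and $\tau(\omega)$ the three minimal degrees in \eqref{eq:bdegree}, \eqref{eq:bdegreeprime}, and \eqref{eq:tauomega} respectively. I aim to show $d_\Omega \geq d_{\Omega'} \geq \tau(\omega) \geq d_\Omega$.

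The first two inequalities are essentially ``free'' from the nesting of constraints. First, since $\Omega' \subseteq \Omega$, any admissible $b$ for \eqref{eq:bdegree} satisfies $0 \leq b|_{\Omega'}$ and hence is admissible in \eqref{eq:bdegreeprime}; minimizing over the larger family yields $d_{\Omega'} \leq d_\Omega$. Second, any admissible $b$ in \eqref{eq:bdegreeprime} factors as $b(x) = \prod (x-t_i)^{r_i} u(x)$ with $r_i \geq 1$, so $b(t_i) = 0$ automatically, and since $x(t) \subseteq \Omega'$, we have $b(x_j) \geq 0$ whenever $\omega_j = 1$. Thus $b$ is admissible in the defining set of $\tau(\omega)$, giving $\tau(\omega) \leq d_{\Omega'}$.

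The nontrivial direction is $\tau(\omega) \geq d_\Omega$, for which I would exhibit a polynomial of degree $\tau(\omega)$ admissible in \eqref{eq:bdegree}. Here I invoke the ``moreover'' clause of Lemma \ref{l:bdeg}: there exists an extremal polynomial $p$ of degree $\tau(\omega)$ whose only roots are the nodes $t_i$ (each with multiplicity $r_i \geq 1$). Write $p(x) = \prod_{i=1}^n(x-t_i)^{r_i} u(x)$ with $u$ a polynomial not vanishing at any $t_i$ and with $u$ itself having no real roots among the $t_i$ or on $\Omega$ (in fact, for the extremal polynomials constructed in the proof, $u$ is a nonzero constant). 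The key point is then that $p$ has constant sign on each open interval $(t_j, t_{j+1})$ for $j=0,\dots,n$, because its only real roots lie at the endpoints of these intervals.

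It remains to see that this common sign is nonnegative on those intervals that meet $\Omega$. If $\omega_j = 0$, then by definition $\Omega \cap (t_j, t_{j+1}) = \emptyset$, and the sign of $p$ there is irrelevant. If $\omega_j = 1$, then there is an $x_j \in \Omega \cap (t_j, t_{j+1})$ with $p(x_j) \geq 0$ by admissibility in \eqref{eq:tauomega}; combined with the constancy of sign, $p \geq 0$ on the whole interval $(t_j, t_{j+1})$, and in particular on $\Omega \cap (t_j, t_{j+1})$. Together with $p(t_i) = 0$ at the nodes themselves, we conclude $p \geq 0$ on $\Omega$, so $p$ is admissible for \eqref{eq:bdegree}, giving $d_\Omega \leq \deg p = \tau(\omega)$.

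The main obstacle, which is handled cleanly by the lemma's additional conclusion, is ensuring that the extremal polynomial for $\tau(\omega)$ has its zeros confined to the node set $t$; without this, $p$ could in principle change sign inside an interval $(t_j,t_{j+1})$ and fail to be nonnegative on all of $\Omega \cap (t_j,t_{j+1})$, even though it is nonnegative at the single sample point $x_j$.
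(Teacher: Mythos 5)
Your proof is correct and follows essentially the same route as the paper: the two easy inequalities come from nesting of constraints, and the nontrivial direction uses the ``moreover'' clause of Lemma \ref{l:bdeg} to obtain a $\tau$-extremal polynomial whose only real zeros are the nodes, hence of constant sign on each interval $(t_j,t_{j+1})$, forcing nonnegativity on all of $\Omega$. The paper phrases this by introducing the set $\Omega^{\star}=\bigcup_{\omega_j=1}(t_j,t_{j+1})\cup t\supseteq\Omega$, but the argument is the same.
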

\begin{proof} Clearly, the restriction $0\leq b|_\Omega$ is more stringent
than $0\leq b|_{\Omega'}$, equivalent to the definition of
$\tau(\omega)$ in \eqref{eq:tauomega}. But by Lemma \ref{l:bdeg}, at
least one $\tau$-extremal polynomial $p$ can be taken as product of
linear factors $(x-t_i)$, which does not vanish, hence does not
change sign, but only at nodes of $t$. Therefore, $p$ must be
strictly positive on the whole of $\cup_{\omega_j=1} (t_j,t_{j+1})$.
Putting $\Omega^{\star}:=\cup_{\omega_j=1} (t_j,t_{j+1}) \cup t$, we
are led to $p|_{\Omega^{\star}}\geq 0$, so in view of $\Omega\subset
\Omega^{\star}$, also $p|_{\Omega}\geq 0$. Whence \eqref{eq:bdegree}
cannot exceed $\tau(\omega)$, and the quantities \eqref{eq:bdegree}
and \eqref{eq:bdegreeprime} are equal.
\end{proof}

To give the relationship between the values  of $\tau(\omega)$ \and
$d_n(\Omega)$ the following considerations are appropriate. For a
given node-system we define
\begin{align*}
t^{(j)}&:=(t_1,\dots,t_{j-1},t_{j+1},\dots,t_n) \intertext{and then}
\omega^{(j)}:=\omega(t^{(j)})&\:=(\omega_0,\dots,\omega_{j-2},1,
\omega_{j+1}\dots,\omega_n)\in \{0,1\}^n,
\end{align*}
 since in the
interval $(t_{j-1},t_{j+1})$ there is always an element of $\Omega$,
namely at least $t_j$. This means that in $\omega$ we replace the
pair $t_{j-1},t_j$ by a single digit $1$ thus resulting in
$\omega^{(j)}$.

Given a node system $t$ (or equivalently $\omega$), first we have to
determine the maximal degree of the corresponding basis elements
$b_1,b_2,\dots, b_n$, and take the minimum with respect to the
choice of $t$. Also we have the restriction that $b_i$ ($i\ne j$)
vanishes at $t_j$ in a strictly larger order than $b_j$ (see the end
of Section \ref{subs:intr}). However, by the above we  know that the
minimal degree system of $b_j$'s can be constructed with zeroes only
in points of $t^{(j)}$. This means that by assuming strictly
positive values at $t_j$, we can forget about the extra assumption
on the order of vanishing. That is we first have to determine
\begin{equation*}%\label{eq:sigmadef}
\sigma(\omega):=\max_{j=1,\dots,n} \tau(\omega^{(j)}),
\end{equation*}
which then will be the lowest possible maximal degree of the
$b_j$'s, given the set $\Omega$ and the node system $t\subset
\Omega$.

It is obvious that
%\begin{equation*}%\label{eq:goal}
$ d_n(\Omega)=\min_{t\subset \Omega, \#t=n}
\sigma(\omega)\quad\mbox{holds}$; %\end{equation*}
see the formulation in \eqref{eq:dnomegadef}.

Before determining $d_n(\Omega)$ we first compute $\sigma(\omega)$
for any $0$-$1$ sequence $\omega$.
\begin{lemma}\label{l:sigma} For any $n\in\NN$ and
$\omega\in\{0,1\}^{n+1}$ the following formula holds:
\begin{equation}\label{eq:sigmaformula}
\sigma(\omega)=\begin{cases} n-1=\tau(\omega)-1 &\textrm{if} \quad
\omega={\mathbf 0}, \\
2n-2=\tau(\omega)-2 &\textrm{if} \quad \omega={\mathbf 1,} \\
2n-2=\tau(\omega)-1 &\textrm{if} \quad \omega=(0,1,\dots,1)
~~\textrm{or}~~
(1,\dots,1,0),\\
2n-3=\tau(\omega)-1 &\textrm{if} \quad \omega=(0,1,\dots,1,0), \\
\tau(\omega) &\textrm{otherwise}.
\end{cases}
\end{equation}
\end{lemma}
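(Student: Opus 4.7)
The plan is to compute $\sigma(\omega)=\max_j\tau(\omega^{(j)})$ by tracking how the three statistics $N$, $K$, $\nu$ from Lemma~\ref{l:bdeg} change under the merge operation $\omega\mapsto\omega^{(j)}$ (which replaces the pair $(\omega_{j-1},\omega_j)$ by a single $1$). Since $\omega^{(j)}\in\{0,1\}^{n}$ corresponds to $n-1$ nodes, we have $\tau(\omega^{(j)})=(n-2)+N(\omega^{(j)})-K(\omega^{(j)})+\nu(\omega^{(j)})$, and the ``loss'' $\ell_j:=\tau(\omega)-\tau(\omega^{(j)})$ equals $1-\Delta N_j+\Delta K_j-\Delta\nu_j$ with $\Delta X_j:=X(\omega^{(j)})-X(\omega)$. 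The task then reduces to computing $\min_j\ell_j$, since $\sigma(\omega)=\tau(\omega)-\min_j\ell_j$.

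First I would dispose of $\omega=\mathbf 0$ and $\omega=\mathbf 1$ by direct inspection: every $\omega^{(j)}$ is, respectively, a single $1$ in an all-zero background (giving $\sigma=n-1=\tau(\omega)-1$) or again $\mathbf 1$ (giving $\sigma=2n-2=\tau(\omega)-2$). For the three remaining exceptional shapes $\omega\in\{(0,1^n),(1^n,0),(0,1^{n-1},0)\}$ there are no inner zero blocks and no $(0,0)$ transitions, so every merge is either a $(1,1)$ (forcing $\ell_j=2$) or a boundary $(0,1)$/$(1,0)$ at an isolated non-inner zero (forcing $\ell_j=1$, since $K=0=K'$ and $\Delta N_j=0$). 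A one-line check in each of the three subcases then yields $\sigma=\tau-1$.

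The crux of the proof is to show that every \emph{other} $\omega$ admits some $j$ with $\ell_j=0$, whence $\sigma(\omega)=\tau(\omega)$. Such $\omega$ must either (i)~contain an inner zero block $[a,b]$ of length $L\geq 1$, or (ii)~be of the form $(0^p,1^q,0^r)$ with $q\geq 1$ and $\max(p,r)\geq 2$. In case~(i) with $L$ odd, take $j=a$: the $(1,0)$ merge turns the block into an inner block of even length $L-1$ (or deletes it when $L=1$), giving $\Delta N_j=0$, $\Delta K_j=-1$, $\Delta\nu_j=0$ and $\ell_j=0$. In case~(i) with $L$ even, take $j=a+1$: the $(0,0)$ merge leaves an inner block of even length $L-2$ (or no block when $L=2$), giving $\Delta N_j=+1$, $\Delta K_j=0$ and $\ell_j=0$. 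In case~(ii) (say $p\geq 2$; the case $r\geq 2$ is symmetric), take $j=p-1$: the $(0,0)$ merge at the inner edge of the leading zero block yields $\omega^{(j)}=(0^{p-2},1^{q+1},0^r)$, which still has no inner zero block, so $\Delta N_j=+1$, $\Delta K_j=0$ and $\ell_j=0$.

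The main obstacle is the careful bookkeeping: for each of the four possible values of $(\omega_{j-1},\omega_j)$ one must verify that the merge produces exactly the claimed $\Delta N_j$, $\Delta K_j$, $\Delta\nu_j$, being attentive to how the inner/outer status and parity of every affected zero block changes. One must also verify the exhaustiveness of the classification, which rests on the elementary observation that an $\omega$ without any inner zero block is necessarily of the form $(0^p,1^q,0^r)$, and that the excluded exceptional shapes correspond precisely to $(p,r)\in\{(1,0),(0,1),(1,1)\}$. Once this is in place, collating the five cases yields formula~\eqref{eq:sigmaformula}.
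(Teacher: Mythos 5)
Your framework is the one the paper itself uses: express $\tau$ via Lemma \ref{l:bdeg} and track how $N$, $K$, $\nu$ change under the merge $\omega\mapsto\omega^{(j)}$. Your handling of the five exceptional shapes is correct (there you do inspect every $j$), your exhaustiveness argument for the generic case is sound, and the three witnesses you construct --- shaving a $(1,0)$ pair off an odd inner zero block, merging a $(0,0)$ at the edge of an even inner block, and merging a $(0,0)$ at the inner edge of an end block of length at least $2$ --- are essentially the same ones the paper exhibits, with the correct values of $\Delta N_j$ and $\Delta K_j$.

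The gap is in the step ``every other $\omega$ admits some $j$ with $\ell_j=0$, whence $\sigma(\omega)=\tau(\omega)$.'' Since $\sigma(\omega)=\max_j\tau(\omega^{(j)})=\tau(\omega)-\min_j\ell_j$, exhibiting one $j$ with $\ell_j=0$ only proves $\sigma(\omega)\geq\tau(\omega)$; you still owe the complementary bound $\ell_j\geq 0$ for \emph{every} $j$, i.e.\ $(N-K)(\omega^{(j)})\leq (N-K)(\omega)+1$ for an arbitrary merge. This is not a formality: it is roughly half of the paper's proof, which checks that a single merge can raise $N$ by one only when a pair of zeros is replaced (and then no odd inner block can vanish, so $K$ does not drop), and can lower $K$ by at most one, only when a $0$--$1$ end pair of an odd inner block is replaced (and then $N$ is unchanged) --- so $N-K$ never gains more than $1$. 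Your closing remark about verifying ``the claimed deltas'' for the four merge types gestures at case analysis but never states or invokes this inequality, and without it the conclusion does not follow. Note that you could instead get $\sigma(\omega)\leq\tau(\omega)$ for free: by Corollary \ref{c:omegaandprime}, $\tau(\omega^{(j)})$ equals the minimum in \eqref{eq:bdegree} for the node set $t\setminus\{t_j\}$, whose feasible set contains that for $t$, so $\tau(\omega^{(j)})\leq\tau(\omega)$ for all $j$. With either repair the argument is complete.
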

\begin{proof} The reader will have no difficulty in checking the
first four exceptional cases, hence we can skip the details.

Assume now that $\omega$ is not of the four exceptional case. Note
that since now $\omega\ne \mathbf 0$, we necessarily have
$\nu(\omega)=\nu(\omega^{(j)})=0$ for all $j=1,\dots,n$, hence by
Lemma \ref{l:bdeg}, for these $\omega$ we have
$\tau(\omega)=n-1+N(\omega)-K(\omega)$ and $\tau(\omega^{(j)})=n-2 +
N(\omega^{(j)}) - K(\omega^{(j)})$, in view of the decrease of the
length of the $0$-$1$ sequence. So actually have to keep track the
change of $N$ and $K$ when the $\omega^{(j)}$ are formed from the
given non-exceptional $\omega$.

Observe that $K(\omega^{(j)})\geq K(\omega)-1$, since no two
different (odd length) inner zero blocks can be influenced by the
operation of substituting $\omega_{j-1}$ and $\omega_j$ by one
single digit of $1$. Moreover, this decrease in the number of odd
length inner blocks can arise only if at one end of such a block,
the ending combination of $0$ and $1$ (or, $1$ and $0$) is replaced
by $1$. If any other, totally inner segment of an (odd) inner zero
block is replaced by $1$, then the number of zeros remaining from
the block remains odd, hence there will be at least one remaining
heir of the original odd block, again counted in $K(\omega^{(j)})$,
and thus not allowing $K$ to decrease. But if a pair of a $0$ and
$1$ is substituted by 1, then $N$ remains unchanged, so in all, in
case $K$ decreases, the difference of $N-K$ can increase only by $1$
(at most). Similarly, if $N$ increases, that requires that a pair of
zeros is substituted by $1$, so $N$ increases by $1$, but then $K$
can not decrease, as seen above. (It can increase, though, but that
is out of our concern now.) So, again, the difference $N-K$ can
increase at most by $1$. Summing up, we have, apart from the four
extra cases, $(N-K)(\omega^{(j)})\leq (N-K)(\omega)+1$.

We now show that for some particular index $j$ this unit increase of
$N-K$ really occurs. If $K(\omega)\geq 1$, then we just take an
ending pair of $0$ and $1$, as analyzed above, and replacing it by
$1$ will do the stated increase.

Finally, if there are no inner odd length zero blocks, then there
must be a pair of zeros somewhere. Indeed, either there is an even
inner block of zeros (in this case we find neighboring zeros), or
there is no inner block of zeros at all. But in the latter case
there must be at least two neighboring zeros at beginning or at the
end of $\omega$, since otherwise we would be in the exceptional
cases, which was excluded.

Also, $\omega\ne \mathbf 0$, so there must be some $1$'s in
$\omega$. So, let us consider an occurrence of a double zero in the
sequence $\omega$, which is neighboring a digit of $1$. Now replace
this two double zeros by $1$. Thus $N$ increases by $1$, while no
new inner block is generated moreover all odd inner zero blocks
remain odd inner zero blocks. Therefore we than have $N-K$
increasing by 1.

In all we find that apart from the four exceptional cases,
$(N-K)(\omega^{(j)})\leq (N-K)(\omega)+1$ for all $j=1,\dots,n$, and
for some particular choice of $j$ this inequality becomes an
equality: that is, according to the discussion above, we obtain
$\max_j\tau(\omega^{(j)})=\tau(\omega)$, as claimed.
\end{proof}

For low values of $n$ and for finite $\Omega$ the value of
$d_n(\Omega)$ is trivial to determine. These cases are included in
the next proposition for the sake of completeness only.

\begin{proposition}\label{p:sidecases} If $n=1$, then a
one dimensional positive basis of the constant function space
$\PP_0$ exists, for arbitrary choice of $t=\{t_1\}$. If $n=2$, then
$d_n(\Omega)=1$ for any (compact) $\Omega$ having at least two
elements, with the choice of $t=\{t_1,t_2\}=\{a,b\}$. If $\Omega$ is
finite, then $\dim C(\Omega) = \# \Omega$, so if $\#\Omega <n$, then
there is no nodes and basis systems of $n$ elements. If $\#
\Omega=n$, then $t=\Omega$ is the only choice, and
$d_n(\Omega)=n-1$. For all other cases $d_n(\Omega)>n-1$.
\end{proposition}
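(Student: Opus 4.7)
The proposition gathers five side claims; the plan is to dispatch them separately in order of increasing substance, all of which reduce to brief arguments thanks to the machinery already in place.

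The cases $n=1$, $n=2$, and $\#\Omega<n$ are essentially immediate. For $n=1$ the constant function $1$ spans $\PP_0$ and is a one-element positive basis, with any $t_1\in\Omega$ serving as a (vacuous) node. For $n=2$ I take $t_1:=a$, $t_2:=b$ and set $b_1(x):=b-x$, $b_2(x):=x-a$; both are nonnegative on $\Omega\subseteq[a,b]$ and any combination $p=\lambda_1 b_1+\lambda_2 b_2$ with $p|_\Omega\ge 0$ satisfies $\lambda_1(b-a)=p(a)\ge 0$ and $\lambda_2(b-a)=p(b)\ge 0$, so $\{b_1,b_2\}$ is a positive basis and $d_2(\Omega)\le 1$. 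The matching lower bound $d_2(\Omega)\ge 1$ is forced by dimension. Lastly, if $\#\Omega<n$ then $\dim C(\Omega)=\#\Omega<n$ precludes any $n$ linearly independent functions.

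The case $\#\Omega=n$ is handled by Lagrange interpolation. The node sequence must exhaust $\Omega$; setting $L_i(x):=\prod_{j\neq i}(x-t_j)/(t_i-t_j)$, these degree $n-1$ polynomials take only the values $0$ and $1$ on the finite set $\Omega$ and are therefore nonnegative. Any combination $p=\sum_i c_i L_i$ satisfies $p(t_j)=c_j$, so $p|_\Omega\ge 0$ forces all $c_j\ge 0$, yielding a positive basis and $d_n(\Omega)\le n-1$. The reverse inequality follows from $\dim\PP_{n-2}(\Omega)\le n-1<n$.

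The remaining case is $n\ge 3$ with $\#\Omega>n$, where the plan is to invoke the identity $d_n(\Omega)=\min_{t\subset\Omega,\,\#t=n}\sigma(\omega(t))$ together with Lemma \ref{l:sigma}. The decisive observation is that $\#\Omega>n$ forces $\omega(t)\neq\mathbf 0$ for every admissible node system $t$, since some point of $\Omega$ must fall in one of the open intervals $(t_j,t_{j+1})$. A brief case check then shows $\sigma(\omega)\ge n$: each of the four exceptional formulas in Lemma \ref{l:sigma} yields $\sigma\ge n$ once $n\ge 3$, while for any non-exceptional $\omega\neq\mathbf 0$ we have $\nu(\omega)=0$ and $N(\omega)>K(\omega)$, so $\sigma(\omega)=\tau(\omega)=n-1+N-K\ge n$. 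Hence $d_n(\Omega)\ge n>n-1$. The only mild obstacle is this last book-keeping, but it is entirely routine given the explicit formulas of Lemma \ref{l:sigma}.
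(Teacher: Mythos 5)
Your proof is correct and follows essentially the same route as the paper: the trivial cases are dispatched by explicit constructions (which the paper simply calls trivial), and the substantive claim $d_n(\Omega)>n-1$ for $\#\Omega>n\geq 3$ is obtained exactly as in the paper, by noting that $\omega(t)\neq\mathbf 0$ and then reading off $\sigma(\omega)\geq n$ from Lemma \ref{l:sigma} via the exceptional cases and the inequality $N(\omega)>K(\omega)$.
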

\begin{proof} Apart from the trivial cases, it suffices to show
that for $\# \Omega > n\geq 3$ we necessarily have
$d_n(\Omega)>n-1$. Indeed, for any choice of $t$, there remains a
point $x\in \Omega\setminus t$, hence the type $\omega$ of $t$ with
respect to $\Omega$ can not be $\mathbf 0$. We can now combine
Lemmas \ref{l:bdeg} and \ref{l:sigma}: from the latter, we are free
of the first exceptional case, and the next three give all  larger
values of $\sigma(\omega)$ (at least for $n\geq 3$).

In the last case of \eqref{eq:sigmaformula} we have $\sigma(\omega)=
\tau(\omega)$, and from Lemma \ref{l:bdeg} that $\tau(\omega)=
n-1+N(\omega)-K(\omega)$, since now $\nu(\omega)=0$. It remains to
see that $N(\omega)>K(\omega)$, which is clear if $K(\omega)>0$, and
also if $K(\omega)=0$ and $N(\omega)>0$, while
$N(\omega)=K(\omega)=0$ is excluded (for it leads to $\omega=\mathbf
0$). This concludes the proof.
\end{proof}

\begin{corollary}\label{c:noposbas} If $\#\Omega<n$, then the
$n$-dimensional polynomial space $\PP_{n-1}$ does not have a
positive basis in $C(\Omega)$: in particular, if $\Omega$ is
infinite, then no polynomial spaces $\Pm$ can have a positive basis
in $C(\Omega)$.
\end{corollary}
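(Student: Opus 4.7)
The plan is to derive this corollary directly from the characterization of $d_n(\Omega)$ set up in \eqref{eq:dnomegadef} together with Proposition \ref{p:sidecases}. Recall that by definition, if $\PP_{n-1}$ admits a positive basis $\{b_1, \dots, b_n\}$, then since every $b_j$ has degree at most $n-1$, one has $d_n(\Omega) \le n-1$; so a positive basis of $\PP_{n-1}$ is precluded exactly when $d_n(\Omega) > n-1$.

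First I would dispose of the case $\# \Omega < n$ trivially: the evaluation map makes $C(\Omega)$ a vector space of dimension $\# \Omega < n$, so $\PP_{n-1}$ cannot even possess $n$ linearly independent elements in $C(\Omega)$, let alone a positive basis of that size. This is the content of the ``no nodes and basis systems of $n$ elements'' clause in Proposition \ref{p:sidecases}.

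For the ``in particular'' statement, suppose $\Omega$ is infinite and assume for contradiction that some $\PP_m$ has a positive basis. Since $\Omega$ is infinite, polynomials restrict faithfully to $\Omega$, so $\dim \PP_m = m+1$, and such a positive basis would consist of $n := m+1$ elements. Setting $n = m+1$, the assumption forces $d_n(\Omega) \le n-1$. On the other hand, $\# \Omega$ is infinite and in particular $\# \Omega > n$, so Proposition \ref{p:sidecases} applies (for $n \ge 3$) and yields $d_n(\Omega) > n-1$, a contradiction. The low cases $n = 1, 2$ are the trivial exceptions already enumerated in Proposition \ref{p:sidecases} and need only be noted in passing.

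The argument is entirely a repackaging, and there is no substantive obstacle: the real work has already been done in establishing the lower bound $d_n(\Omega) > n-1$ via Lemmas \ref{l:bdeg} and \ref{l:sigma}. The only subtlety worth flagging is keeping the two possible regimes $\# \Omega < n$ and $\# \Omega > n$ distinct — in the first $\PP_{n-1}$ fails to be $n$-dimensional on $\Omega$, while in the second it is $n$-dimensional but its degree bound is too tight to host a positive basis — and it is the second that the ``in particular'' clause invokes.
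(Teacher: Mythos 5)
Your proof is correct and follows exactly the route the paper intends: the corollary is stated without proof as an immediate consequence of Proposition \ref{p:sidecases}, and your argument simply spells out that derivation (the dimension count $\dim C(\Omega)=\#\Omega<n$ for the first clause, and the contradiction between $d_n(\Omega)\le n-1$ and $d_n(\Omega)>n-1$ for infinite $\Omega$ with $n=m+1\ge 3$). The only caveat --- a defect of the corollary's wording rather than of your argument --- is that the ``in particular'' clause genuinely fails for $m\le 1$ (constants and linear functions do admit positive bases), so your remark that $n=1,2$ are exceptions is the honest reading, not a gap.
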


We consider now the case of $\Omega$ being still finite but
possessing sufficiently many points.
\begin{proposition}\label{p:fininf} If $3\leq n\leq \#\Omega<\infty$, then we
have
\begin{equation*}%\label{eq:Omfini}
d_n(\Omega)=\begin{cases}
n&\textrm{if}~~n<\#\Omega \\
n-1&\textrm{if}~~n=\#\Omega.
\end{cases}
\end{equation*}
\end{proposition}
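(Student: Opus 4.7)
The plan is to split the argument in two. The case $n = \#\Omega$ is already subsumed by Proposition~\ref{p:sidecases}, so nothing further is needed there. The substantive case is $3 \leq n < \#\Omega$, for which we must prove $d_n(\Omega) = n$; in view of the identity $d_n(\Omega) = \min_{t\subset\Omega,\ \#t=n} \sigma(\omega(t))$, both the lower and upper bound reduce to combining Lemma~\ref{l:sigma} with a suitable choice of $t$.

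For the lower bound, I observe that since $\#t = n < \#\Omega$ there exists $x \in \Omega\setminus t$, and $x$ must lie in one of the gaps $(t_j,t_{j+1})$ (with the convention $t_0:=-\infty$, $t_{n+1}:=+\infty$), so $\omega(t) \neq \mathbf{0}$. I then sweep through the remaining lines of Lemma~\ref{l:sigma}: the cases $\omega=\mathbf{1}$ and $\omega\in\{(0,1,\dots,1),(1,\dots,1,0)\}$ give $\sigma=2n-2$, while $\omega=(0,1,\dots,1,0)$ gives $\sigma=2n-3$, and all three are at least $n$ once $n\geq 3$. In the generic case $\nu(\omega)=0$ and $N(\omega)-K(\omega)\geq 1$ (either $K\geq 1$, in which case $N>K$ by the remark following the definition, or $K=0$ and $N\geq 1$), so $\sigma(\omega)=\tau(\omega)=n-1+N-K\geq n$.

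For the upper bound, it suffices to exhibit one node system realizing $\sigma(\omega(t))=n$. Enumerating $\Omega=\{p_1<p_2<\dots<p_{\#\Omega}\}$ and setting $t:=\{p_{\#\Omega-n+1},\dots,p_{\#\Omega}\}$, the only $\Omega$-points outside $t$ are $p_1,\dots,p_{\#\Omega-n}$, and they all sit below $t_1$, while every other gap $(t_j,t_{j+1})$ with $j\geq 1$ is empty by the choice of consecutive $p_i$'s. Hence $\omega(t)=(1,0,\dots,0)$, which is none of the four exceptional sequences of Lemma~\ref{l:sigma}, so $\sigma(\omega)=\tau(\omega)=(n-1)+1-0+0=n$, completing the proof. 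The entire argument is bookkeeping on top of Lemma~\ref{l:sigma}; the only point deserving care is the strict inequality $N>K$ in the generic subcase of the lower bound.
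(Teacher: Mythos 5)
Your proof is correct and takes essentially the same approach as the paper: the lower bound is exactly the content of Proposition~\ref{p:sidecases} (which you re-derive from Lemmas~\ref{l:bdeg} and~\ref{l:sigma} rather than cite), and your upper-bound construction using the last $n$ points of $\Omega$, with type $(1,0,\dots,0)$, is the mirror image of the paper's choice of the first $n$ points, with type $(0,\dots,0,1)$. Both give $\sigma(\omega)=n-1+1-0=n$, so the arguments coincide up to reflection.
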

\begin{proof} Proposition
\ref{p:sidecases}  gives the case of $n=\#\Omega$ and also
$d_n(\Omega)\geq n$ for  $n<\#\Omega$.

For the converse inequality let us take as $t_j$, for $j=1,\dots,n$,
the first $n$ points of $\Omega$, according to increasing order.
Then $\omega(t)=(0,0,0,\dots,0,1)$, as $b>t_n$ is a point of
$\Omega$. We have
$\sigma(\Omega)=n-1+N(\omega)-K(\omega)=n-1+1-0=n$.  So
$d_n(\Omega)\leq n$, the proof is complete.
\end{proof}

We have settled the case of a compact interval in Section
\ref{sec:interval}, and the above Proposition \ref{p:sidecases} and
\ref{p:fininf} contains the otherwise easy case of a finite
$\Omega$. Therefore, we now consider the remaining cases of a
compact, non-convex $\Omega$ with infinitely many elements. This is
actually the point where the topological-geometrical features of
$\Omega$ enter the picture. Since $\RR\setminus \Omega$ is open, it
is a union of disjoint open intervals: more precisely, $\RR\setminus
\Omega=(-\infty,a)\cup(b,\infty)\cup \left( \cup_{k\in\mathcal{K}}
I_k\right)$, where $\mathcal{K}$ is a finite or countably infinite
index set which is nonempty. We will use this decomposition of
$\RR\setminus \Omega$ as a canonical representation in the
following. Note that the intervals $I_k$ are disjoint and inside
$(a,b)$, but they might have common endpoints. We will denote the
endpoints of $I_k$ as $\alpha_k$ and $\beta_k$, so
$I_k=(\alpha_k,\beta_k)$.

We  call a family $\{I_{\ell}~:~\ell\in\mathcal{L}\}$ of holes
(i.e., intervals $I_k$, $k\in\mathcal{K}$) a \emph{free family of
holes}, if with the index set $\mathcal{L}\subset\mathcal{K}$ we
have $\overline{I_{\ell}}\cap \overline{I_{\ell'}}=\emptyset$,
($\ell\ne\ell', \ell,\ell'\in\mathcal{L}$).

Let $\dot{\Omega}$ denote the set of limit points of $\Omega$. There
might be some points in $\Omega\setminus\con \dot{\Omega}$. They are
all isolated points of $\Omega$, and we call these \emph{eccentric}
points of $\Omega$. Further, we define
\begin{equation*}%\label{eq:lambdadef}
\lambda(\Omega):=\max \{ \#\mathcal{L} ~~:~~
\mathcal{L}\subset\mathcal{K} ~~\textrm{is a free family of
holes}\}.
\end{equation*}
Note that obviously $\lambda(\Omega)=\infty$ if and only if $\Omega$
has infinitely many components. This for example the case, if there
are infinitely many eccentric points. Whereas if there are only
finitely many of them, then we can introduce the notations
$\Theta_\ell$, $\Theta_r$ for the parity of their number on left
respectively on the right hand side of $\Omega$.

We now  describe $d_n(\Omega)$ in terms of these topological
characteristics.

\begin{proposition}\label{p:lambdainfty} If $\lambda(\Omega)=\infty$,
then we have
\begin{equation*}%\label{eq:Ominfty}
d_n(\Omega)=\begin{cases} 1 &\textrm{if}~~n=1\\
1 &\textrm{if}~~n=2\\
n&\textrm{if}~~2<n.
\end{cases}
\end{equation*}
\end{proposition}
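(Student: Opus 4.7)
The cases $n \le 2$ are immediate from Proposition~\ref{p:sidecases}, so I concentrate on $n \ge 3$. Since $\lambda(\Omega) = \infty$, $\Omega$ has infinitely many components, hence $\#\Omega > n$, and Proposition~\ref{p:sidecases} already yields $d_n(\Omega) > n - 1$, i.e., $d_n(\Omega) \ge n$. Only the matching upper bound $d_n(\Omega) \le n$ requires work.

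I exhibit a node system $t \subset \Omega$ whose type $\omega = \omega(t)$ satisfies $\sigma(\omega) = n$. Using $\lambda(\Omega) = \infty$, choose a free family of holes $I_j = (\alpha_j, \beta_j)$ ordered by $\beta_j < \alpha_{j+1}$, all lying strictly inside $(a, b)$. For $n = 2m$ set $t_1 = a$, $t_n = b$ and $t_{2j} = \alpha_j$, $t_{2j+1} = \beta_j$ for $j = 1, \dots, m - 1$ (using $m - 1$ free holes). For $n = 2m + 1$ set $t_1 = a$ and $t_{2j} = \alpha_j$, $t_{2j+1} = \beta_j$ for $j = 1, \dots, m$, picking the last free hole with $\beta_m < b$ (using $m$ free holes; possible since the free family is infinite). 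In either case $\omega_j = 0$ at every even index $j$: $\omega_0 = 0$ from $t_1 = a$, each $\omega_{2j} = 0$ because $(t_{2j}, t_{2j+1}) = I_j$ is a hole, and, when $n$ is even, $\omega_n = 0$ from $t_n = b$. At the odd indices $\omega_j \in \{0, 1\}$ is determined by whether $\Omega$ meets $(t_j, t_{j+1})$; when $n$ is odd the last index $\omega_n$ is odd and equals $1$ because $b \in (\beta_m, \infty) \cap \Omega$.

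Let $s := N(\omega)$. Since $\Omega$ is infinite while the node set has only $n$ elements, at least one undetermined interval must meet $\Omega$, so $s \ge 1$, hence $\omega \ne \mathbf 0$ and $\nu(\omega) = 0$. The decisive structural point is a parity property: all $1$'s of $\omega$ sit at odd indices and all even indices carry $0$'s. Therefore any inner zero block of $\omega$ runs between two consecutive $1$'s at odd indices $p_i < p_{i+1}$ across positions $p_i + 1, \dots, p_{i+1} - 1$, which has odd length $p_{i+1} - p_i - 1 = 2k - 1$. The (at most two) boundary zero blocks adjoining position $0$, and position $n$ when $n$ is even, are non-inner. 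Counting gives $K(\omega) = s - 1$, hence $N(\omega) - K(\omega) = 1$, and Lemma~\ref{l:bdeg} yields $\tau(\omega) = n$.

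It remains to check that $\omega$ is not one of the four exceptional sequences of Lemma~\ref{l:sigma}. Now $\omega_0 = 0$ excludes $\mathbf 1$ and $(1, \dots, 1, 0)$; the digit $\omega_2 = 0$ (which holds because the first free hole $I_1$ is always present for $n \ge 3$) excludes $(0, 1, \dots, 1)$ and $(0, 1, \dots, 1, 0)$; and $s \ge 1$ excludes $\mathbf 0$. Lemma~\ref{l:sigma} therefore gives $\sigma(\omega) = \tau(\omega) = n$, proving $d_n(\Omega) \le n$ and completing the proof. The main obstacle is the parity bookkeeping of the third paragraph: the nodes are placed precisely so that the $1$-positions of $\omega$ are confined to odd indices, which automatically forces every inner zero block to have odd length, killing the $K$-contribution uniformly, no matter how many intermediate intervals happen to miss $\Omega$.
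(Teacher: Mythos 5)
Your proof is correct, and the overall strategy coincides with the paper's: the lower bound $d_n(\Omega)\geq n$ for $n\geq 3$ comes from Proposition \ref{p:sidecases} (since $\lambda(\Omega)=\infty$ forces $\#\Omega>n$), and the upper bound comes from exhibiting a node system built out of endpoints of a free family of holes, whose type is then evaluated through Lemmas \ref{l:bdeg} and \ref{l:sigma}. The difference lies in the choice of node system and in how its type is controlled. The paper lists $n$ free holes avoiding $a$ and $b$, uses only every \emph{second} one, and takes the endpoints of the chosen holes as nodes (adding $b$ when $n$ is odd, and keeping $a$, $b$ out of the node set otherwise); the skipped holes guarantee a point of $\Omega$ between consecutive chosen holes, so the type is completely determined as $(1,0,1,0,\dots)$ and $N(\omega)-K(\omega)=1$ is read off at once. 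You instead take \emph{consecutive} holes together with the endpoint(s) $a$ (and $b$ for even $n$), which leaves the odd-indexed digits of $\omega$ undetermined; your parity observation --- all $1$'s sit at odd indices, so every inner zero block has odd length and $K(\omega)=N(\omega)-1$ however the undetermined digits fall --- correctly recovers $\tau(\omega)=n$, and your check that $\omega$ avoids the exceptional cases of Lemma \ref{l:sigma} (via $\omega_0=\omega_2=0$ and $N(\omega)\geq 1$) is also sound. Your construction costs a little extra bookkeeping but needs only about $n/2$ free holes instead of $n$; both are of course available since $\lambda(\Omega)=\infty$.
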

\begin{proof} As before, Proposition \ref{p:sidecases} settles the
cases $n=1,2$ and gives $d_n(\Omega)\geq n$ for $n\geq 2$, so again
it suffices to show $d_n(\Omega)\leq n$ for $n>2$.

By assumption we can take $n$ free holes, such that none of their
endpoints is $a$ or $b$. Arrange them according to the increasing
order, take every second of them starting with the first (altogether
$[n/2]$ free holes), and consider their endpoints as node system
$t$. That is, if the intervals are $I_1,\dots,I_n$, with
$I_j=(\alpha_j,\beta_j)$, ($j=1,\dots,n$), then consider the point
set $\{\alpha_1,\beta_1,\dots,\alpha_{2k-1},\beta_{2k-1},\dots\}$ of
$2[n/2]$ elements, and, in case $n$ is odd, add the point $b$ to
form $t$. For the corresponding type we have
\begin{align*}
\omega(t)&=(1,0,1,0,1,\dots,
1,0,1,0)\quad\mbox{if $n$ is odd,}\\
\mbox{or}\quad\omega(t)&=(1,0,1,0,1,\dots,1,0,1,0,1)\quad\mbox{if
$n$ is even}~.
\end{align*}
From this we see $K(\omega)=N(\omega)-1$, hence $d_n(\Omega)\leq
\sigma(\omega)=n-1+N(\omega)-K(\omega)=n$. By Proposition
\ref{p:sidecases} the equality $d_n(\Omega)=n$ follows.
\end{proof}

\begin{theorem}\label{l:simplelambda} Let $3\leq n$ and let $\Omega$ be an
infinite compact subset of $\RR$. Then we have
\begin{equation}\label{eq:dnsimple}
d_n(\Omega)=\begin{cases} n & \textrm{if} \quad \lambda(\Omega)>
\frac{n}{2},\\
n & \mbox{if } \lambda(\Omega)=\left[\frac{n}{2}\right], \mbox{ $n$ is even},\\
n+\Theta_\ell\Theta_r& \mbox{if } \lambda(\Omega)=\left[\frac{n}{2}\right], \mbox{ $n$ is odd},\\
2(n-1-\lambda(\Omega))+\Theta_\ell+\Theta_r &\mbox{if}\quad
\frac{n}{2}-1 \geq \lambda(\Omega)\geq 1,
 \\ 2n-3 &\textrm{if}\quad
\lambda(\Omega)=0.
\end{cases}
\end{equation}
\end{theorem}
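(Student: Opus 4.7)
The plan is to compute $d_n(\Omega) = \min_{t\in\Omega^n}\sigma(\omega(t))$ (cf.\ the discussion preceding the theorem) by rewriting $\sigma$ for non-exceptional $\omega$ in a form that is easy to extremise. With $Z(\omega):=n+1-N(\omega)$ denoting the number of $0$-digits of $\omega$, Lemma~\ref{l:sigma} gives $\sigma(\omega)=\tau(\omega)=n-1+N-K=2n-(Z+K)$, so the problem reduces to maximising $Z+K$ over admissible node systems. The only relevant exceptional type for infinite $\Omega$ is $\omega=(0,1,\dots,1,0)$, giving $\sigma=2n-3$; this is realisable for every $\Omega$ by taking $t_1=a,\ t_n=b$ with no node inside any hole, but it dominates the minimum only when $\Omega=[a,b]$ (the last line of~\eqref{eq:dnsimple}, already handled by Section~\ref{sec:interval}).

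I would next decompose $\RR\setminus\Omega=(-\infty,a)\cup(b,+\infty)\cup\bigcup_k I_k$ into maximal \emph{chains} of holes (two holes lie in the same chain iff they share an endpoint in $\Omega$): a \emph{left chain} of $\ell$ holes terminating at $a$ and $\min\dot\Omega$, a \emph{right chain} of $r$ holes terminating at $\max\dot\Omega$ and $b$, and the \emph{middle chains} of sizes $m_j$. Since closures of holes from distinct chains are automatically disjoint, and a chain of length $m$ has maximum free subfamily of size $\lceil m/2\rceil$, one has $\lambda(\Omega)=\lceil\ell/2\rceil+\lceil r/2\rceil+\sum_j\lceil m_j/2\rceil$. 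The heart of the lower bound is the per-chain contribution estimate: a middle chain of $m$ holes contributes at most $2\lceil m/2\rceil$ to $Z+K$, while the left chain contributes at most $\ell+1$ and the right chain at most $r+1$, the end-chain maxima being attained by absorbing the entire chain into a terminal block (possible precisely because the first hole $(a,e_2)$ of the left chain can never serve as a separate inner block: $a=\min\Omega$ forces $\omega_0=0$, welding any adjacent zero onto the terminal). Using $\ell+1=2\lceil\ell/2\rceil+(1-\Theta_\ell)$ and the analogue for $r$ and summing yields $Z+K\leq 2\lambda+2-\Theta_\ell-\Theta_r$; combined with the trivial efficiency bound $Z+K\leq n$ (each node contributes at most $1$ to $Z+K$, with equality iff it lies in a terminal zero block or an odd-length inner zero block) this gives
\[
\sigma(\omega)\;\geq\;2n-\min\bigl(n,\,2\lambda+2-\Theta_\ell-\Theta_r\bigr),
\]
which after routine case-by-case inspection matches~\eqref{eq:dnsimple}: in case~4 ($\lambda\leq n/2-1$) the chain bound is binding and produces $\sigma\geq 2(n-1-\lambda)+\Theta_\ell+\Theta_r$, while in case~3 with $\Theta_\ell=\Theta_r=1$ the cap drops to $n-1$ and yields the extra $\Theta_\ell\Theta_r$.

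For the matching upper bound I would exhibit explicit node systems. In case~4 take both end-chains entirely as terminal blocks, each middle chain in its most efficient form (a single odd-length inner block of length $m$ if $m$ is odd, else a length-$(m-1)$ block obtained by dropping one chain endpoint), and distribute the remaining $n-(2\lambda+2-\Theta_\ell-\Theta_r)$ nodes as extras inside $1$-intervals, obtaining exactly $\sigma=2(n-1-\lambda)+\Theta_\ell+\Theta_r$. In cases~1--3, except case~3 with $\Theta_\ell=\Theta_r=1$, the cap $2\lambda+2-\Theta_\ell-\Theta_r$ is $\geq n$, and I would pick a sub-collection of chain contributions whose node-count is exactly $n$ (this is possible thanks to the flexibility $L\in\{0,\dots,\ell+1\}$, $R\in\{0,\dots,r+1\}$ and the even middle-chain quanta), getting $\sigma=n$; in the exceptional subcase of case~3 the cap $n-1$ plus one extra node forces $\sigma=n+1$.

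The main obstacle I foresee is the per-chain bound for the end-chains. Ruling out hybrid schemes in which part of an end-chain serves as a terminal and the remainder as independent inner blocks requires the efficiency-$1$ accounting (each used chain-node adds at most one to $Z+K$) together with the structural observation that the hole immediately beyond a terminal is always blocked---it fuses into the terminal rather than forming a new inner block---so no hybrid scheme can beat the $\ell+1$ (respectively $r+1$) cap. The parity terms $\Theta_\ell,\Theta_r$ then emerge directly from $\ell+1=2\lceil\ell/2\rceil+(1-\Theta_\ell)$, with no ad hoc parity argument beyond this identity.
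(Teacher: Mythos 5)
Your proposal is correct in substance and follows the same overall strategy as the paper --- reduce via Lemma~\ref{l:sigma} to minimising $\sigma(\omega(t))$ over node systems, prove the lower bound by playing the zero blocks of $\omega$ off against free families of holes, and match it with explicit constructions --- but your lower-bound accounting is organised genuinely differently, and in a way that buys something. The paper (Step 5 of its proof) argues globally on an optimal type, writing $2k=\sum_i\mu_i+K=2n-(L+R)-\sigma(\omega)$ over the inner zero blocks and then adding $[L/2]+[R/2]$ free holes from the terminal blocks; the conversion $-(L+R)+2[L/2]+2[R/2]=-2+\Theta_\ell+\Theta_r$ used there tacitly requires the terminal block lengths $L,R$ to have parities opposite to $\Theta_\ell,\Theta_r$, which is not automatic for an arbitrary node system (one may node only part of an end-chain, or none of it), so the $+\Theta_\ell+\Theta_r$ term is not actually secured by that computation alone. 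Your per-chain decomposition repairs this cleanly: since two consecutive $0$-digits always come from holes sharing a node, every zero block lives inside a single chain; a middle chain of $m$ holes contributes at most $2\lceil m/2\rceil$ to $Z+K$ (at equality the blocks are all odd and separated by unused holes, which forces $m$ odd --- worth writing out); and an end-chain of $\ell$ holes contributes at most $\ell+1=2\lceil\ell/2\rceil+(1-\Theta_\ell)$ because its first hole can only ever be absorbed into the terminal block, never stand as a separate inner block. Summing over \emph{all} chains, noded or not, gives $Z+K\le 2\lambda+2-\Theta_\ell-\Theta_r$ uniformly, which together with $Z+K\le n$ (from $N\ge K+1$) yields the lower bounds in every case. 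Two items in your sketch still need to be written down rather than asserted: (i) all exceptional types of Lemma~\ref{l:sigma}, not only $(0,1,\dots,1,0)$, have $\sigma<2n-(Z+K)$, so one must check --- easily, using that $\Theta_\ell=\Theta_r=1$ forces $\lambda\ge 2$ --- that $2n-2$ and $2n-3$ never undercut the claimed values once $\lambda\ge1$; (ii) the upper bound in cases 1--3 cannot rest on ``flexibility'' alone: for odd $n$ one needs the paper's shifting argument to produce $\lfloor n/2\rfloor$ free holes avoiding $a$ (or $b$) whenever $\Theta_\ell\Theta_r=0$, so that the odd node can sit in an efficiency-one terminal block. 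Neither is a gap in the method, only in the write-up.
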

\begin{proof}
The case $\lambda(\Omega)=0$ (the case of the interval) and
$\lambda(\Omega)=\infty$ is known by Section \ref{sec:interval} and
Proposition \ref{p:lambdainfty}. So we can assume $\lambda(\Omega)$
to be positive and finite. In this case $\Omega$ is the union of
finitely many closed (possibly degenerate) intervals, and the
interior of $\Omega$ is not empty. We will use Lemma \ref{l:bdeg}
and Lemma \ref{l:sigma} tacitly.

 \resstep{} \step{} First we consider the case
$\lambda(\Omega)>\frac{n}{2}-1$. The case of even $n=2k$ is easy.
Indeed, consider $k$ free holes $(\alpha_1,\beta_1),\dots,
(\alpha_k,\beta_k)$, ordered increasingly, and their endpoints as
nodes in $t$. Then
$\omega=\omega(t)=(\omega_0,0,\omega_2,0,\omega_4,0,\dots,\omega_{2k-2},0,\omega_{2k})$.
From which $K(\Omega)=N(\omega)-1$ and hence
$\sigma(\omega)=n-1+N(\omega)-K(\omega)=n$ is easy to see.
Proposition \ref{p:sidecases} then gives $d_n(\Omega)=n$.

\step{} Let $n=2k+1$. If $\lambda(\Omega)>k$ then we can take
similarly to the above $(\alpha_1,\beta_1),\dots,
(\alpha_k,\beta_k)$ free holes, such that none of the endpoints
coincides with $a$ (or $b$). In this case $t=\{a,
\alpha_1,\beta_1,\dots, \alpha_k,\beta_k\}$ (or with $b$ instead of
$a$) will be a proper node system for which (in case $\alpha_1\neq
a\in t$) we have
\begin{equation*}
\omega=\omega(t)=(0,\omega_1,0,\omega_3,0,\dots,\omega_{n-2},0,\omega_n).
\end{equation*}
So we have $K(\omega)=N(\omega)-1$, hence
$\sigma(\omega)=n-1+N(\omega)-K(\omega)=n$. As above $d_n(\Omega)=n$
follows.

\step{} For the case $n=2k+1$, $\lambda(\Omega)=k$ consider a
\emph{maximal} family of free holes with $k$ elements
$\{(\alpha_1,\beta_1),\dots,(\alpha_k,\beta_k)\}$, and index them
according to the increasing order.  If $\Theta_\ell=0$ we can shift,
if necessary, the holes having endpoints among the left eccentric
points, in such a way that $\alpha_1\neq a$. We can do the analogous
procedure for the eccentric points on the right of $\Omega$ (with
the condition $\beta_k\neq b$). Still we will have a free family of
holes having exactly $k$ members, which we will use in the
following. The set of the endpoints of these intervals is $t'$. If
$a\not\in t'$ (or $b\not\in t'$) we can add it to $t'$,
$t:=t'\cup\{a\}$, say. Such steps we \emph{cannot} accomplish if and
only if $\Theta_\ell\Theta_r=1$. In this latter case we take any
point which lies in the interior of $\Omega$ and add it to $t'$. In
both of these cases we have a node system of $n$ elements.
 We can easily determine $\omega=\omega(t)$
 \begin{align*}
\omega&=(0,\omega_1,0,\omega_3,\dots,\omega_{n-2},0,\omega_n),&\qquad\mbox{if
$\Theta_\ell\Theta_r=0$},\tag{a}\label{eq:a}\\
\omega&=(\omega_0,0,\omega_2,0,\dots,\omega_{2j},0,{1,1}_{\text{}}
,0,\omega_{2j+5},0,\dots, 0,\omega_n),&\qquad\mbox{if
$\Theta_\ell\Theta_r=1$}\tag{b}\label{eq:b}.
 \end{align*}
From this we can compute the value of $\sigma(\omega)$. In the case
\eqref{eq:a} we have $K(\omega)=N(\omega)-1$, hence
$\sigma(\omega)=n-1+N(\omega)-K(\omega)=n$. While in case
\eqref{eq:b} we obtain $K(\omega)=N(\omega)-2$, so
$\sigma(\omega)\leq n-1+N(\omega)-K(\omega)=n+1$. Summing up
$d_n(\Omega)\leq n+\Theta_\ell\Theta_r$. We leave the lower estimate
to the end of this proof. Actually the case $\Theta_\ell\Theta_r=0$
is already done, since Proposition \ref{p:sidecases} furnishes
$d_n(\Omega)\geq n$ hence $d_n(\Omega)=n$.

\step{} Now we turn to the case $\lambda\leq \frac n2-1$. The
arguments are very similar to the above.  Take a family of free
holes with $k=\lambda(\Omega)$ elements
$\{(\alpha_1,\beta_1),\dots,(\alpha_k,\beta_k)\}$ and index them
according to the increasing order. Repeat the shifting procedure for
the holes involving eccentric points as above (if possible).

If $\Theta_\ell=0$ we can shift, if necessary, the holes having
endpoints among the left eccentric points, in such a way that
$\alpha_1\neq a$. We can do the analogous procedure for the
eccentric points on the right of $\Omega$ (with the condition
$\beta_k\neq b$). Consider the $k'=2k+2-\Theta_\ell-\Theta_r$ points
$t':=\{a,\alpha_1,\beta_1,\dots,\alpha_k,\beta_k,b\}$. Note that,
for example, $a=\alpha_1$ if $\Theta_\ell=1$. Because $\Omega$ is
infinite and there are only finitely many holes in $\Omega$, there
is an interval $I$ completely belonging to $\Omega$. Take now $n-k'$
many points in the interior of $I$, this complements $t'$ to a node
system $t$ of $n$ elements.

The digits of $\omega$ are relatively well-known: first comes some
block  $\omega^{(\ell)}$ then a block of $n-k'+1$ many $1$'s and
finally a block $\omega^{(r)}$
\begin{equation*}
\omega=(\omega^{(\ell)},\underbrace{1,1,\dots,1}_{\text{ $n-k'+1$
ones}},\omega^{(r)}).
\end{equation*}
We observe only $\omega^{(\ell)}$ in detail; the analogous reasoning
applies also for $\omega^{(r)}$. If $\Theta_\ell=1$, then
\begin{equation*}
\omega^{(\ell)}=(0,0,\omega_2,0,\dots,\omega_{2j},0),
\end{equation*}
whereas if $\Theta_\ell=0$, we have
\begin{equation*}
\omega^{(\ell)}=(0,\omega_1,0,\omega_3,0,\dots,\omega_{2j-1},0).
\end{equation*}
 From this form we see that each $0$ block in $\omega^{(r)}$ is either an
inner block of zeros and in this case it is odd, or it is an ending
block and only then it might be of even length. Among
$\omega_2,\dots, \omega_{2j}$ (or $\omega_1,\dots,\omega_{2j-1}$)
there are $L$ digits of $1$. Denoting the number of $1$'s in
$\omega^{(r)}$ by $R$ and reasoning analogously as above we obtain
$N=n-k'+1+L+R$, and $K=L+R$, hence $\sigma(\omega)=n-1+N-K=2n-k'$
and
\begin{equation*}
d_n(\Omega)\leq 2n-2-2\lambda(\Omega)+\Theta_\ell+\Theta_r.
\end{equation*}

\step{}\label{case:5} Now we turn to the lower estimate of
$d_n(\Omega)$ in general. Let us now start with an optimal $t$ of
type $\omega=\omega(t)$, i.e., with $d_n(\Omega)=\sigma(\omega)$.
Denote $N:=N(\omega)$ and $K:=K(\omega)$. Denote by  $L$ and $R$ the
zeros at the left respectively the right side of $\omega$. Suppose
that there are $M$ inner blocks of zeros with respective lengths
$\mu_i$, $i=1\dots, M$. So $n+1-N-(L+R)= \sum_{i=1}^M \mu_i$. The
number of odd blocks is the number of odd $\mu_i$, so
$K=\sum_{i=1}^M \left([(\mu_i+1)/2]-[\mu_i/2]\right)$. Moreover, to
any zero-block of length, say, $\mu$, there corresponds a family of
$[(\mu+1)/2]$ free intervals. The union of any number of such
families, belonging to different zero-blocks, still remains a free
family of intervals. Therefore there exists a free family of holes
with $k:=\sum_{i=1}^M [(\mu_i+1)/2]$ members. So now we can write
\begin{align*}
2k& =2\sum_{i=1}^M \left[\frac{\mu_i+1}{2}\right]=\sum_{i=1}^M
\left(\left[\frac{\mu_i+1}{2}\right]+\left[\frac{\mu_i}{2}\right]\right)
+ K \\ &= \sum_{i=1}^M \mu_i + K =n+1 -N -(L+R)
+K=2n-(L+R)-\sigma(\omega).
\end{align*}
using again formula \eqref{eq:sigmaformula} and Lemma \ref{l:bdeg}.
Observe that $m$ zeros at the left (or right) side of $\omega$
result in additional family of $[m/2]$ free holes. Thus we can write
\begin{align*}
2\lambda(\Omega)&\geq 2k+2\Bigl[\frac{L}{2}\Bigr]+
2\Bigl[\frac{R}{2}\Bigr]= 2n-(L+R)+2\Bigl[\frac{L}{2}\Bigr]+
2\Bigl[\frac{R}{2}\Bigr]-\sigma(\omega)\\
&=2n-2+\Theta_\ell+\Theta_r-\sigma(\omega).
\end{align*}
By optimality of $t$  we have $\sigma(\omega)=d_n(\Omega)$. Thus
$d_n(\Omega)\geq 2n-2-2\lambda(\Omega)+\Theta_\ell+\Theta_r$, hence
the missing part of \eqref{eq:dnsimple} follows.

\step{} Only the case of odd $n$ and $\lambda(\Omega)=[n/2]$ remains
to be proved. We know the upper estimate $d_n(\Omega)\leq
n+\Theta_\ell\Theta_r $ from Step \ref{case:5} and the lower
estimate $d_n(\Omega)\geq
2n-2-2\lambda(\Omega)+\Theta_\ell+\Theta_r$ from Step 5. If
$\Theta_\ell\Theta_r=1$ these reduce to $d_n(\Omega)\leq n+1$ and
$d_n(\Omega)\geq 2n-2\lambda(\Omega)=n+1$, hence the assertion.
Otherwise we obtain $d_n(\Omega)=n$ by Proposition
\ref{p:sidecases}.
\end{proof}

Note that $\lambda(\Omega)$ can be determined in a rather mechanical
way through topological features. A way to do so is to consider all
boundary points -- of which there can be infinitely many if and only
if $\lambda(\Omega)=\infty$ and in this case we are done. Otherwise,
the finitely many boundary points determine finitely many intervals,
and we can form a finite sequence of $0$'s and $1$'s according to
the wether the intervals contain a point of $\Omega$ or not. If
$\mu_1,\mu_2,\dots,\mu_k$ are lengths of the inner zero-blocks, then
we have  $\lambda(\Omega)=\sum_{i=1}^k
\left[\frac{\mu_i+1}{2}\right]$.

For the subspaces $X\subseteq \Pm$ with positive basis and of
maximal dimension we obtain directly from Theorem
\ref{l:simplelambda} the following.
\begin{theorem} Let $\Omega$ be an infinite compact subset of $\RR$, $X\subseteq \Pm(\Omega)$ be a subspace with positive
basis and, as such, of maximal possible dimension. Then we have
\begin{equation*}
\dim X=\begin{cases} m & \textrm{if} \quad \frac{m}{2}\leq \lambda(\Omega),\\[1ex]
m-\Theta_\ell\Theta_r& \mbox{if } \frac{m-1}{2}=\lambda(\Omega),\\[1ex]
\bigl[\tfrac{m-\Theta_\ell-\Theta_r}{2}\bigr]+1+\lambda(\Omega)&\mbox{if}\quad
\frac{m-2}{2} \geq \lambda(\Omega)\geq 1,
 \\[1ex]
 \bigl[\tfrac{m+3}{2}\bigr] &\textrm{if}\quad
\lambda(\Omega)=0.
\end{cases}
\end{equation*}
\end{theorem}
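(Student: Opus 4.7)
The argument is a direct translation of the formula for $d_n(\Omega)$ given in Theorem \ref{l:simplelambda}. The key observation is that an $n$-dimensional subspace $X \subseteq \mathcal{P}_m(\Omega)$ admits a positive basis if and only if $d_n(\Omega) \leq m$: any positive basis of $X$ has all degrees bounded by $m$, while conversely the extremal bases exhibited in the course of proving Theorem \ref{l:simplelambda} (built from products of linear factors as in Lemma \ref{l:bdeg}) show that the infimum defining $d_n(\Omega)$ is attained by an actual basis lying in $\mathcal{P}_{d_n(\Omega)}$. Hence the maximal possible dimension equals $n^*(m) := \max\{n : d_n(\Omega) \leq m\}$, and it suffices to invert each branch of \eqref{eq:dnsimple}.

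\textbf{Inversion of the four cases.} For $\lambda(\Omega) = 0$ the inequality $2n - 3 \leq m$ gives $n^*(m) = \bigl[\tfrac{m+3}{2}\bigr]$. For $\lambda(\Omega) \geq 1$ a short case analysis of \eqref{eq:dnsimple} shows that $n \mapsto d_n(\Omega)$ is monotonically non-decreasing, so the task reduces to locating the threshold. If $m \geq 2\lambda(\Omega) + 2$, the threshold falls in the regime $n \geq 2\lambda(\Omega) + 2$ where $d_n(\Omega) = 2(n-1-\lambda(\Omega)) + \Theta_\ell + \Theta_r$; solving for $n$ and taking the integer part yields $n^*(m) = \bigl[\tfrac{m - \Theta_\ell - \Theta_r}{2}\bigr] + 1 + \lambda(\Omega)$. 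If instead $m \leq 2\lambda(\Omega)$, then $n = m$ already sits in the regime $n \leq 2\lambda(\Omega)$, where $d_n(\Omega) = n$; while $n = m+1$ gives $d_n > m$ in every subcase of \eqref{eq:dnsimple}, so $n^*(m) = m$. The boundary case $m = 2\lambda(\Omega) + 1$ is settled by testing $n = m$, an odd integer with $[\tfrac{n}{2}] = \lambda(\Omega)$: here $d_m(\Omega) = m + \Theta_\ell\Theta_r$, which equals $m$ precisely when $\Theta_\ell\Theta_r = 0$ and otherwise forces $n^*(m) = m - 1 = m - \Theta_\ell\Theta_r$.

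\textbf{Main technical point.} The only place requiring care is verifying that the $n^*(m)$ produced by each formula genuinely lies within the intended regime of \eqref{eq:dnsimple}, especially at the boundary $m = 2\lambda(\Omega) + 2$, where the answer can descend from $2\lambda(\Omega)+2$ down to $2\lambda(\Omega) + 1$ when $\Theta_\ell + \Theta_r > 0$. A short substitution confirms that the ``main regime'' expression $\bigl[\tfrac{m - \Theta_\ell - \Theta_r}{2}\bigr] + 1 + \lambda(\Omega)$ and the ``odd boundary'' expression $m - \Theta_\ell\Theta_r$ agree at every such transition, and the monotonicity of $d_n(\Omega)$ guarantees that the claimed $n^*(m)$ is indeed the maximum. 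This delivers the four-line formula of the theorem verbatim.
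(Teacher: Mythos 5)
Your proposal is correct and follows exactly the route the paper intends: the paper states this theorem as following ``directly from Theorem \ref{l:simplelambda}'', and your derivation --- characterizing the maximal dimension as $\max\{n : d_n(\Omega)\leq m\}$, verifying monotonicity of $n\mapsto d_n(\Omega)$, and inverting each branch of \eqref{eq:dnsimple} with attention to the boundary values $m=2\lambda(\Omega)+1$ and $m=2\lambda(\Omega)+2$ --- is precisely the omitted computation. The boundary checks you flag do work out (e.g.\ at $m=2\lambda(\Omega)+2$ with $\Theta_\ell+\Theta_r>0$ the main-regime formula correctly returns $2\lambda(\Omega)+1$, consistent with $d_{2\lambda(\Omega)+1}=2\lambda(\Omega)+1+\Theta_\ell\Theta_r\leq m$), so nothing is missing.
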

Also in higher dimensions there are known (e.g., geometric)
conditions that imply the existence a positive basis in a given
finite dimensional subspace $X\subseteq C(\Omega)$. In connection to
our investigation we ask can pose the following
\begin{question}\label{q:multivariate} Consider $\Omega\subset\RR^d$
a compact, convex set -- or, a general compact set. What is the
maximal dimension of subspaces of $\Pn(\Omega)$ with positive
basis? %%% What can be said for $\Omega$?
\end{question}

\parindent0pt
\end{document}